\newtheorem{theorem}{Theorem}
\newtheorem{lemma}[theorem]{Lemma}
\newtheorem{corollary}[theorem]{Corollary}
\DeclarePairedDelimiter\floor{\lfloor}{\rfloor}
\title[Solubility of Additive Quartic Forms]{Solubility of Additive Quartic Forms over Ramified Quadratic Extensions of $\mathbb{Q}_2$}
\date{\today}
\begin{document}

\author{Drew Duncan}
\address{Department of Mathematics, Computer Science, and Data Science\\
John Carroll University\\
University Heights, OH 44118}
\email{rduncan@jcu.edu}

\author{David B. Leep}
\address{Department of Mathematics\\
University of Kentucky\\
Lexington, KY 40506}
\email{leep@uky.edu}


\begin{abstract}
We determine the minimal number of variables $\Gamma^*(d, K)$ which guarantees a nontrivial solution for every additive form of degree $d=4$ over the four ramified quadratic extensions $\mathbb{Q}_2(\sqrt{2}), \mathbb{Q}_2(\sqrt{10}), \mathbb{Q}_2(\sqrt{-2}), \mathbb{Q}_2(\sqrt{-10}) $ of $\mathbb{Q}_2$.  In all four fields, we prove that $\Gamma^*(4,K) = 11$.  This is the first example of such a computation for a proper extension of $\mathbb{Q}_p$ where the degree is a power of $p$ greater than $p$.
\end{abstract}

\subjclass[2010]{11D72, 11D88, 11E76}

\keywords{Forms in many variables, p-adic fields, ramified extensions, additive forms}

\maketitle

\section{Introduction}

Homogeneous forms of the type
\begin{equation}
\label{form}
a_1x_1^d + a_2x_2^d + \ldots + a_s x_s^d
\end{equation} where $a_1, \ldots, a_s$ belong to some field $K$ are known as \textit{additive forms} of degree $d$.  It is clear that every such form has a \textit{trivial} zero, where each variable takes the value zero.  A famous conjecture of Artin claimed that any homogeneous form over a p-adic field $K$ (a finite extension of $\mathbb{Q}_p$, the field of p-adic numbers) of degree $d$ in $d^2+1$ variables has a \textit{nontrivial} zero regardless of the choice of coefficients from $K$.  Although many well-known counterexamples to this conjecture have been discovered (see \cite{MR197450}, \cite{greenberg1969lectures}, \cite{bartholdi_thesis}), none of them have been additive forms.  It has therefore been proposed that the conjecture holds when restricted to additive forms.  We will refer to this as Artin's Additive Form Conjecture.

Let $\Gamma^*(d, K)$ represent the minimum number of variables $s$ such that every form (\ref{form}) is guaranteed to have a nontrivial zero, regardless of the choice of $a_i \in K$.  In this language, Artin's Additive Form Conjecture posits that $\Gamma^*(d, K) \le d^2+1$. Davenport and Lewis introduced the method of \textit{contraction} in \cite{davenport1963homogeneous}, and used it to establish the truth of Artin's Additive Form Conjecture for every field of $p$-adic numbers $\mathbb{Q}_p$. Indeed, this bound is exact when $d=p-1$; i.e., $\Gamma^*(p-1, \mathbb{Q}_p) = (p-1)^2 + 1$.  They noted, however, that when $d \ne p-1$ very often a much lower value of $\Gamma^*(d, \mathbb{Q}_p)$ can be found.  Exact values of $\Gamma^*(d, \mathbb{Q}_p)$ are known only in a small number of cases (e.g., \cite{knapp_exact_gamma__MR2811557}, \cite{knapp_2adic_diagonal__MR3846799}), and even less is known in general about $\Gamma^*$ for algebraic extensions of $\mathbb{Q}_p$.  For unramified extensions of $K / \mathbb{Q}_p$, $p>2$, the bound $\Gamma^*(d, K) \le d^2 + 1$ was established in \cite{leep2018diagonal}, and it was recently shown in \cite{MGK} that this bound also holds for all degrees not a power of 2 over all seven quadratic extension of $\mathbb{Q}_2$.  We established exact values of $\Gamma^*(d, K)$ for the six ramified quadratic extensions of $\mathbb{Q}_2$ and any degree $d=2m$, $m$ odd (\cite{2020arXiv200509770D}, \cite{2020arXiv201006833D}).  Up to now, no results have been established for Artin's Additive Form Conjecture when the degree is a power of 2 at least 4 over proper extensions of $\mathbb{Q}_p$.  In this paper, we find exact values of $\Gamma^*(4,K)$ over four ramified quadratic extensions of $\mathbb{Q}_2$, demonstrating the following result.

\begin{theorem}
If $K \in \{\mathbb{Q}_2(\sqrt{2}), \mathbb{Q}_2(\sqrt{10}), \mathbb{Q}_2(\sqrt{-2}), \mathbb{Q}_2(\sqrt{-10})\}$, then  $$\Gamma^*(4, K) = 11.$$
\end{theorem}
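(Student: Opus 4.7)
The plan is to establish $\Gamma^*(4,K) \ge 11$ and $\Gamma^*(4,K) \le 11$ separately.

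For the lower bound I would exhibit an explicit anisotropic additive quartic form in $10$ variables. Let $\pi$ be a uniformizer of $K$. Because $d=4$ is a power of the residue characteristic $p=2$, the quotient $K^{*}/K^{*4}$ is large in these fields, affording enough distinct coefficient classes to build a candidate form whose coefficients span several unit cosets distributed across the four valuation classes modulo~$4$. Non-solubility is then verified by assuming a primitive zero $(x_1,\ldots,x_{10})$, normalizing so that $\min_i v(x_i)=0$, and showing that the $\pi$-adic valuations of the ten summands cannot cancel; this reduces to a finite check modulo a fixed small power of $\pi$.

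For the upper bound, let $F = \sum_{i=1}^{11} a_i x_i^4$ be an arbitrary additive quartic form over $K$. After scaling the variables and the form by appropriate powers of $\pi$, reduce to the \emph{normalized} case $v(a_i)\in\{0,1,2,3\}$. Writing $s_j$ for the number of indices with $v(a_i)=j$, so $s_0+s_1+s_2+s_3 = 11$, one decomposes
\[
F \;=\; F_0 + \pi F_1 + \pi^2 F_2 + \pi^3 F_3,
\]
where each $F_j$ is a unit-coefficient additive quartic in $s_j$ variables. The main tool is Davenport--Lewis \emph{contraction}: given a primitive zero of some $F_j$ modulo a suitable power of $\pi$, either the zero lifts to a zero of $F$ via Hensel's lemma, or it produces an auxiliary unit-coefficient variable at a higher level, yielding a shorter form with a modified distribution tuple. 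The proof then proceeds by exhaustive case analysis on the tuples $(s_0,s_1,s_2,s_3)$ of total $11$, verifying in each case that $F$ has a nontrivial zero, possibly after iterating contraction.

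The principal obstacle is that $d = p^2$ with $p=2$: over the residue field $\mathbb{F}_2$, the map $u\mapsto u^4$ is the identity on $\mathbb{F}_2^{*}$, so the mod-$\pi$ reduction of each $F_j$ behaves like a linear form in its variables and on its own cannot force nontrivial zeros. One must therefore work modulo $\pi^2,\pi^3,\pi^4$ and exploit fine structural information about the unit classes in $\mathcal{O}_K^{*}/\mathcal{O}_K^{*4}$; the ramification further couples uniformizer powers with unit squarings, so the classical Davenport--Lewis contraction lemmas have to be redeveloped in this setting. I expect the tightest cases of the distribution analysis---those where few of the $s_j$ are large, leaving little room for contraction---to drive the bulk of the technical work and to require tailored lemmas about unit-coefficient quartic sums modulo $\pi^4$ specific to each of the four fields listed.
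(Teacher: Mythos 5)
Your proposal outlines essentially the same two-part strategy as the paper: an explicit anisotropic $10$-variable form (the paper uses $G+\pi H$ or $G+(\pi+\pi^2)H$ with $G$ four unit-coefficient terms and $H$ six) for the lower bound, and normalization plus Davenport--Lewis contractions plus Hensel's lemma with an exhaustive case analysis over valuation types $(s_0,s_1,s_2,s_3)$ for the upper bound. One detail to flag: for $d=4$, $e=2$, $p=2$ the Hensel threshold is $\gamma=\lfloor e/(p-1)\rfloor+e\tau+1=7$, so contractions must be pushed through level $k+7$ and the relevant unit-power computations (Lemma \ref{powers}) are carried out modulo $\pi^7$, not merely $\pi^4$; the paper also leans on a bookkeeping device (a contracted variable being ``free'' at a later level, Lemma \ref{contractions}(4)) that your sketch doesn't anticipate but which is what makes the tight cases close.
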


Further, we conjecture that the following holds for the remaining two quadratic ramified extensions.

\bigskip

\textbf{Conjecture.}
\textit{Artin's Conjecture holds for all additive quartic forms over ramified quadratic extensions of $\mathbb{Q}_2$. In particular:
}
$$\Gamma^*(4, \mathbb{Q}_2(\sqrt{-1})) = 11,$$

$$\Gamma^*(4, \mathbb{Q}_2(\sqrt{-5})) = 9.$$

\bigskip

We have $\Gamma^*(4, \mathbb{Q}_2(\sqrt{-1})) \ge 11$ because
$$x_0^4 + x_1^4 + x_2^4 + x_3^4 + (\pi + \pi^3)(x_4^4 + x_5^4 + x_6^4 + x_7^4) + (\pi^2 + \pi^3)x_8^4 + (\pi^3 + \pi^4 + \pi^5)x_9^4$$
is anisotropic, and  $\Gamma^*(4, \mathbb{Q}_2(\sqrt{-5})) \ge 9$ because
$$x_0^4 + x_1^4 + x_2^4 + x_3^4 + \pi(x_4^4 + x_5^4 + x_6^4 + x_7^4)$$ is anisotropic, where $\pi$ is as in Table \ref{fields}.  Direct computation shows that neither of these forms has a primitive zero modulo $\pi^7$.

\section{Preliminaries}

\begin{table}[]
    \caption{Uniformizer and representation of 2}
    \centering
    \begin{tabular}{|c|c|c|}
        \hline 
         $K$ & $\pi$ & $2 \pmod{\pi^7}$ \rule{0pt}{2.6ex} \\
         \hline
         \rule{0pt}{2.6ex} \rule[-0.9ex]{0pt}{0pt}
         $\mathbb{Q}_2(\sqrt{2})$ & $\sqrt{2}$ & $\pi^2$ \rule{0pt}{2.6ex} \rule[-0.9ex]{0pt}{0pt}\\
         \hline
         $\mathbb{Q}_2(\sqrt{-2})$ & $\sqrt{-2}$ & $\pi^2 + \pi^4$ \rule{0pt}{2.6ex} \\
         \hline
         $\mathbb{Q}_2(\sqrt{10})$ & $\sqrt{10}$ & $\pi^2 + \pi^6$ \rule{0pt}{2.6ex} \\
         \hline
         $\mathbb{Q}_2(\sqrt{-10})$ & $\sqrt{-10}$ & $\pi^2 + \pi^4 + \pi^6$ \rule{0pt}{2.6ex} \\
         \hline
         $\mathbb{Q}_2(\sqrt{-1})$ & $1 + \sqrt{-1}$ & $\pi^2 + \pi^3 + \pi^5 + \pi^6$ \rule{0pt}{2.6ex} \\
         \hline
         $\mathbb{Q}_2(\sqrt{-5})$ & $1 + \sqrt{-5}$ & $\pi^2 + \pi^3 + \pi^4 + \pi^6$ \rule{0pt}{2.6ex} \\
         \hline
    \end{tabular}
    \label{fields}
\end{table}

Let $K$ denote one of the six ramified quadratic extensions of $\mathbb{Q}_2$, $\mathcal{O}$ denote its ring of integers, and $e=2$ denote its degree of ramification.  Without loss of generality, assume $a_i \in \mathcal{O}\backslash\{0\}$.  Let $\pi$ be a uniformizer (generator of the unique maximal ideal) of $\mathcal{O}$, so that any $c \in \mathcal{O}$ can be written $c = c_0 + c_1\pi + c_2\pi^2 + c_3\pi^3 + \ldots$, with $c_i \in \{0,1\}$.  The choices of uniformizer and corresponding representation of 2 used in this paper are listed in Table \ref{fields} (cf. this data modulo $\pi^5$ in Table 1 of \cite{knapp2016solubility}).

It is straightforward to see that the choices of $\pi$ for $\mathbb{Q}_2(\sqrt{2})$, $\mathbb{Q}_2(\sqrt{-2})$, 
$\mathbb{Q}_2(\sqrt{10})$, and  
$\mathbb{Q}_2(\sqrt{-10})$ are suitable; in each case $\pi^2$ gives 2 times an odd integer, and every odd integer is a unit in $\mathcal{O}$.  In the case of $\mathbb{Q}_2(\sqrt{2})$, $\pi^2 = 2$, and in $\mathbb{Q}_2(\sqrt{-2})$, $\pi^4 + \pi^2 = 2$ are exact representations of 2.  However, in the other four fields, the exact representations of 2 with respect to the choice of uniformizer have an infinite number of nonzero terms.  Note that $\pi^7 \mid 8\pi \mid 16$.  For $\pi = \sqrt{10}$, we have $16 | \pi^2 + \pi^6 - 2 = 1008$, and for $\pi = \sqrt{-10}$ we have $16 | \pi^2 + \pi^4 + \pi^6 - 2 = -912$.  For $\pi = 1 + \sqrt{-5}$ we have $\pi^2 + \pi^3 + \pi^4 + \pi^6 - 2= 152 + 40\sqrt{-5} = (8\pi)(\frac{22 - 7\sqrt{-5}}{3})$.  (Note that in this case, $\frac{1}{3} \in \mathcal{O}$.)  For $\pi = 1 + \sqrt{-1}$, we again have a closed form for the representation $2 = \pi^2 + \pi^3 + \frac{\pi^5}{1-\pi}$.

For the remainder of this chapter, let $K$ denote one of the ramified quadratic extensions $\mathbb{Q}_2(\sqrt{2}), \mathbb{Q}_2(\sqrt{10}), \mathbb{Q}_2(\sqrt{-2}), \mathbb{Q}_2(\sqrt{-10})$, assume that the number of variables $s$ in the form (\ref{form}) is 11, and the exponent $d$ is 4, i.e,
\begin{equation}
\label{quartic_form}
a_1x_1^{4} + a_2x_2^{4} + \ldots + a_{11}x_{11}^{4}.
\end{equation}

Factoring out the highest power of $\pi$, the coefficient $c$ of any variable $x$ can be written in the form $c = \pi^r(c_0 + c_1\pi + c_2\pi^2 + c_3\pi^3 + \ldots)$, $c_0 \neq 0$.  (Indeed, for the fields considered here, $c_0=1$.)  Such a variable is said to be at \textit{level $r$}, and we will refer to the value of $c_1$ as its \textit{$\pi$-coefficient}, and more generally $c_k$ as its \textit{$\pi^k$-coefficient}.  We will also have occasion to refer to selections of coefficients as a \textit{coefficient class}.

A set of constraints on a form will be referred to as a type, and any form satisfying those constraints will be said to be of that type.  In particular for this paper, the notation $(s_0, s_1, \ldots, s_\ell)$ will describe the type of a form which has \textit{at least} $s_i$ variables in level $i$ for each level $i$ listed.  Levels which are omitted in this notation may be assumed to contain as few as no variables.  For example, $(4, 2, 0, 1)$ indicates a form with at least four variables at level 0, at least two variables at level 1, as few as no variables at level 2, at least one variable in level 3, and as few as no variables at any higher level.

The change of variables $\pi^r x^d = \pi^{r-id}(\pi^i x)^d = \pi^{r-id}y^d$ for $i \in \mathbb{Z}$ replaces any variable with one at a level that differs from the level of the original variable by a multiple of $d$.  Intuitively, we can think of this as moving a variable up or down a multiple of $d$ levels.  Because this change of variables doesn't change whether or not the form has a nontrivial zero, we will often simply consider the level of a variable modulo $d$. Thus for $d=4$, it suffices to consider only forms which have all of their variables in levels 0 through 3, and so throughout this paper we will consider types of forms with only four levels listed, e.g., forms of type $(5, 0, 1, 1)$.

Multiplying a form by $\pi$ increases the level of each variable by one, and does not affect the existence of a nontrivial zero.  Considering the levels of variables modulo $d$, applying this transformation any number of times effects a cyclic permutation of the levels.  Therefore, we extend the type notation of the previous paragraph by considering all cyclic permutations of the type notation to denote the same type.  For example, a form of type $(1,1,5,0)$ will also be said to be of type $(5,0,1,1)$.  This transformation may also be used to arrange the variables in an order which is more convenient, as specified in Lemma \ref{normalization}.  We will refer to this process as \textit{normalization}. See Lemma 3 of \cite{davenport1963homogeneous} for a proof of the following Lemma.

\begin{lemma}
\label{normalization}
Given an additive form of degree $d$ in an arbitrary p-adic field $K$, let $s$ be the total number of variables and $s_i$ be the number of variables in level $i \pmod{d}$. By a change of variables, the form may be transformed to one with:
\begin{align}
\begin{split}
s_0 \ge \frac{s}{d},  
\end{split}
\begin{split}
s_0 + s_1 \ge \frac{2s}{d},
\end{split}
\begin{split}
\ldots,
\end{split}
\begin{split}
s_0 + \ldots + s_{d-1} = s
\end{split}
\end{align}
\end{lemma}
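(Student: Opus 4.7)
The plan is to reduce the statement to a purely combinatorial cyclic-shift claim about the tuple $(s_0, s_1, \ldots, s_{d-1})$. Two operations preserve the existence of a nontrivial zero: replacing a single variable by $\pi^i x$ (which sends a level-$r$ variable to level $r-id$, preserving the level class modulo $d$), and multiplying the entire form by $\pi$ (which shifts every variable's level by $+1$, cyclically rotating the counts modulo $d$). The first operation lets us place every variable in $\{0,1,\ldots,d-1\}$ without altering the counts $s_i$; the second gives us $d$ candidate rotations to choose from. So the problem reduces to the following: among the $d$ cyclic rotations of $(s_0,\ldots,s_{d-1})$, find one whose length-$j$ partial sum is at least $js/d$ for each $1 \le j \le d$.

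To select the right rotation I would introduce the discrepancy function
$$f(j) = T_j - \frac{js}{d}, \qquad T_j = s_0 + s_1 + \cdots + s_{j-1},$$
which satisfies $f(0) = f(d) = 0$ and therefore extends to a periodic function on $\mathbb{Z}$ of period $d$. Let $m^* \in \{0,1,\ldots,d-1\}$ be an index at which $f$ attains its minimum, and multiply the form by $\pi^{d-m^*}$ so that the new level-$i$ count becomes $s_{(m^*+i) \bmod d}$. Using the periodicity of $f$ to absorb any wrap-around beyond index $d$, the length-$j$ partial sum of the rotated tuple equals $f(m^* + j) - f(m^*) + js/d$, which is $\ge js/d$ by the minimality of $f(m^*)$. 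This is exactly the inequality demanded by the lemma.

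The argument is essentially the cycle lemma applied to the mean-zero sequence $(s_i - s/d)_{i=0}^{d-1}$, so there is no substantial obstacle; the only small care needed is the clean bookkeeping of the cyclic wrap-around when $m^* + j > d$, which the periodic extension of $f$ handles automatically. This matches the proof of Lemma 3 in \cite{davenport1963homogeneous}.
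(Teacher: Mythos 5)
The paper does not give its own proof of this lemma; it simply cites Lemma 3 of Davenport and Lewis \cite{davenport1963homogeneous}. Your argument is correct and is essentially the standard one found there: reduce to choosing a cyclic rotation of $(s_0,\ldots,s_{d-1})$, define the discrepancy $f(j)=T_j - js/d$, extend $d$-periodically using $f(0)=f(d)=0$, rotate to start at an index $m^*$ minimizing $f$, and observe that the rotated partial sums are $f(m^*+j)-f(m^*)+js/d \ge js/d$. The bookkeeping — multiplying the form by $\pi^{d-m^*}$ realizes the rotation $s'_i = s_{(m^*+i)\bmod d}$, and the wrap-around is absorbed by periodicity — is handled correctly, so this is a complete proof in the same spirit as the cited source.
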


Consider two variables $x_1,x_2$ in the same level, and without loss of generality assume their coefficients $c_1,c_2$ are not divisible by $\pi$ (by the above cyclic permutation of levels).  Suppose there is an assignment $x_1 = b_1, x_2 = b_2$ with at least one of $b_1,b_2$ in $\mathcal{O}^\times$, such that $\pi^k | (a_1 b_1^d + a_2 b_2^d)$.  Then the change of variables $x_1=b_1 y, x_2 = b_2 y$ yields a form in which the two variables $x_1, x_2$ are replaced with the new variable $y$ at least $k$ levels higher.  If this new form has a nontrivial zero, then there is a nontrivial zero of the original form.  This transformation is known as a \textit{contraction}, and it is the key technique used to obtain all of the results that follow.

Contractions can be used to demonstrate the existence of a nontrivial zero by applying the following version of Hensel's Lemma, specialized for diagonal forms (see \cite{leep2018diagonal}, Theorem 2.3 for a proof):

\begin{lemma}[Hensel's Lemma]
\label{diagonal_hensels_lemma}
Let $\gamma = \begin{cases} 1 & \text{if $\tau = 0$} \\ \floor{\frac{e}{p-1}} + e\tau + 1 & \text{if $\tau \ge 1.$}
\end{cases}$\\
Let $d = mp^\tau$, where $(m, p) = 1$.  Suppose that $b,c \in \mathcal{O}^\times$ and that the congruence $cx^d \equiv b \bmod \pi^{\nu}$ has a solution $a \in \mathcal{O}$ for some $\nu \ge \gamma$.  Then, the congruence $cs^d \equiv b \bmod \pi^{\nu+1}$ has a solution $t$ where $t \equiv a \bmod \pi^{\nu - e\tau}$.  Consequently, the equation $cx^d = b$ has a solution in $\mathcal{O}$.
\end{lemma}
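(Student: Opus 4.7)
The plan is to prove the lifting step by Newton iteration and deduce the full statement by convergence. First, since $b, c \in \mathcal{O}^\times$ and $ca^d \equiv b \pmod{\pi^\nu}$ with $\nu \ge 1$, I would note that $a$ must be a unit, so writing $f(x) := cx^d - b$, the derivative has valuation $v(f'(a)) = v(dca^{d-1}) = v(d) = e\tau$, where $v$ denotes the normalized $\pi$-adic valuation. I would then look for the lifted solution in the form $t = a + \pi^{\nu - e\tau} u$ with $u \in \mathcal{O}$ to be determined (the exponent $\nu - e\tau$ is nonnegative because $\nu \ge \gamma > e\tau$). Expanding via the binomial theorem and writing $f(a) = \pi^\nu w$ for some $w \in \mathcal{O}$, one obtains
\begin{equation*}
f(t) = \pi^\nu\bigl(w + (d/\pi^{e\tau})\,ca^{d-1}\,u\bigr) + \sum_{j=2}^{d} c\binom{d}{j} a^{d-j} \pi^{j(\nu - e\tau)} u^j.
\end{equation*}
Because $d/\pi^{e\tau} \in \mathcal{O}^\times$, the linear congruence $w + (d/\pi^{e\tau})ca^{d-1} u \equiv 0 \pmod{\pi}$ determines a unique residue class for $u$, and any lift $u \in \mathcal{O}$ gives a candidate $t$ with $t \equiv a \pmod{\pi^{\nu - e\tau}}$.

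The crux of the proof is verifying that every higher-order binomial contribution lies in $\pi^{\nu+1}\mathcal{O}$, i.e., that
\begin{equation*}
v\!\left(\binom{d}{j}\right) + j(\nu - e\tau) \ge \nu + 1 \qquad (2 \le j \le d).
\end{equation*}
Using Legendre's formula $v_p(k!) = (k - s_p(k))/(p-1)$ together with $v(p) = e$, one computes $v(\binom{d}{j}) = e\bigl(s_p(j) + s_p(d-j) - s_p(d)\bigr)/(p-1)$. A case analysis then shows that the binding constraint on $\nu$ is achieved at $j = p$ (which coincides with $j = 2$ when $p=2$) and yields exactly $\nu \ge \floor{e/(p-1)} + e\tau + 1 = \gamma$. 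This combinatorial verification is the main obstacle, since every other index $j$ must also be checked to give a strictly weaker constraint.

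Once the single lifting step is in hand, iterating produces a sequence $a = t_0, t_1, t_2, \ldots \in \mathcal{O}$ satisfying $c t_n^d \equiv b \pmod{\pi^{\nu+n}}$ and $t_{n+1} \equiv t_n \pmod{\pi^{\nu+n-e\tau}}$, which is Cauchy. By completeness of $\mathcal{O}$ it converges to some $t$ with $ct^d = b$ and $t \equiv a \pmod{\pi^{\nu - e\tau}}$, completing the proof. The case $\tau = 0$ is simpler: then $f'(a)$ is itself a unit, so the classical strong Hensel argument applies directly to any $\nu \ge 1$.
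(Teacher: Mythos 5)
The paper does not prove Lemma~\ref{diagonal_hensels_lemma} internally; it cites Theorem~2.3 of \cite{leep2018diagonal} for the proof, so there is no in-paper argument to compare against. Your Newton-iteration framework is the standard and correct approach: the substitution $t = a + \pi^{\nu-e\tau}u$, the observation that $d/\pi^{e\tau}$ is a unit so the linear congruence for $u$ is solvable modulo $\pi$, the reduction to the inequality $v\bigl(\binom{d}{j}\bigr) + j(\nu-e\tau) \ge \nu+1$ for $2 \le j \le d$, and the Cauchy-sequence/completeness argument at the end are all set up correctly, as is the separate treatment of $\tau = 0$.

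However, the central estimate is asserted rather than proved: ``a case analysis then shows that the binding constraint on $\nu$ is achieved at $j=p$'' identifies what must be verified but does not verify it, and you yourself flag it as ``the main obstacle.'' That verification is where the actual content of the lemma lives, so as written the proof has a genuine gap. To close it, you do not need the full digit-sum expression from Legendre's formula; the identity $\binom{d}{j} = \tfrac{d}{j}\binom{d-1}{j-1}$ gives $v_p\bigl(\binom{d}{j}\bigr) \ge \tau - v_p(j)$, i.e.\ $v\bigl(\binom{d}{j}\bigr) \ge e\tau - e\,v_p(j)$, so the required inequality reduces to $(j-1)(\nu-e\tau) \ge e\,v_p(j) + 1$. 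Since $\nu \ge \gamma$ forces $\nu - e\tau \ge \lfloor e/(p-1)\rfloor + 1 \ge 1$, the case $p \nmid j$ is immediate; and if $v_p(j) = k \ge 1$ then $j - 1 \ge p^k - 1 \ge k(p-1)$, so
$(j-1)(\nu-e\tau) \ge k(p-1)\bigl(\lfloor e/(p-1)\rfloor + 1\bigr) \ge k(e+1) \ge ek+1$,
where the middle step uses $(p-1)\lfloor e/(p-1)\rfloor \ge e-(p-2)$. Equality is only possible at $j=p$, confirming that $\gamma$ is the sharp threshold. Supplying this calculation (or an equivalent one) is what is needed to make your argument complete.
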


The aim is then to show that a series of contractions can be performed which ``raises" a variable $\gamma$ levels.  To simplify showing that such a series of contractions can be formed, one may first designate a level $k$.  Then, by showing that contractions involving at least one variable from the designated level produce a variable at level $k+\gamma$, a nontrivial solution follows.  Specializing Lemma \ref{diagonal_hensels_lemma} to the case $e = 2$, $\tau = 2$, $p = 2$, we obtain:

\begin{lemma}
\label{quartic_hensels_lemma}
Let $x_i$ be a variable of (\ref{quartic_form}) at level $k$.  Suppose that $x_i$ can be used in a contraction of variables (or one in a series of contractions) which produces a new variable at level at least $k+7$.  Then (\ref{quartic_form}) has a nontrivial zero.
\end{lemma}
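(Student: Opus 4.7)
The plan is to obtain Lemma \ref{quartic_hensels_lemma} as a direct specialization of Lemma \ref{diagonal_hensels_lemma}. Since $K$ is a ramified quadratic extension of $\mathbb{Q}_{2}$ we have $p = e = 2$, and $d = 4 = 1 \cdot 2^{2}$ gives $m = 1$, $\tau = 2$. Plugging these into the formula (in the $\tau \ge 1$ branch),
\[
\gamma = \floor{e/(p-1)} + e\tau + 1 = 2 + 4 + 1 = 7,
\]
so any congruence $cX^{4} \equiv b \pmod{\pi^{7}}$ with $b, c \in \mathcal{O}^{\times}$ lifts to an exact equation $cX^{4} = b$ solvable in $\mathcal{O}$, with the solution $X$ congruent to the approximate solution modulo $\pi^{7-e\tau} = \pi^{3}$.

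Next I would translate the contraction data into a congruence of this form. The given series of contractions corresponds to a substitution $x_{i} = \beta_{0} Y$ and $x_{j_{\ell}} = \beta_{\ell} Y$ for auxiliary indices $j_{1}, \ldots, j_{r}$ and scalars $\beta_{0}, \ldots, \beta_{r} \in \mathcal{O}$, with $\beta_{0} \in \mathcal{O}^{\times}$, such that
\[
a_{i}\beta_{0}^{4} + \sum_{\ell=1}^{r} a_{j_{\ell}} \beta_{\ell}^{4} \equiv 0 \pmod{\pi^{k+7}}.
\]
Setting $x_{j_{\ell}} = \beta_{\ell}$ and every other variable to $0$, the vanishing of (\ref{quartic_form}) reduces to the single equation $a_{i} X^{4} = B$ with $B := -\sum_{\ell} a_{j_{\ell}} \beta_{\ell}^{4}$. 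Since $a_{i} = \pi^{k} c$ with $c \in \mathcal{O}^{\times}$ and $\beta_{0}$ is a unit, the above congruence forces $B$ to have valuation exactly $k$; writing $B = \pi^{k} b$ with $b \in \mathcal{O}^{\times}$ and cancelling $\pi^{k}$, we obtain $c X^{4} \equiv b \pmod{\pi^{7}}$ (approximate solution $X = \beta_{0}$), which by the preceding paragraph lifts to an exact solution. The lifted $X$ is a unit (being $\equiv \beta_{0} \pmod{\pi^{3}}$), so the assembled tuple is a nontrivial zero of (\ref{quartic_form}).

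The one step that requires care is the claim that a chain of contractions can always be arranged so that the designated level-$k$ variable $x_{i}$ is assigned a unit scalar $\beta_{0}$. For a single pair contraction the definition guarantees only that \emph{one} of the two substitution scalars is a unit; in a longer chain one must track which variable plays the role of the anchor at each stage. This bookkeeping is the only substantive subtlety, because the unit hypothesis on $\beta_{0}$ is precisely what places both $b$ and $c$ in $\mathcal{O}^{\times}$, which in turn is the exact hypothesis needed to invoke Lemma \ref{diagonal_hensels_lemma}.
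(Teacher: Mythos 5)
Your proposal is correct and matches the paper's approach: the paper obtains this lemma as an immediate specialization of Lemma \ref{diagonal_hensels_lemma} with $p=2$, $e=2$, $\tau=2$, giving $\gamma = \floor{2/1} + 4 + 1 = 7$, exactly as you compute. The bookkeeping subtlety you flag is in fact automatic, since a contraction of variables lying in the same level can only raise the level if \emph{all} of the substitution scalars involved are units, so the anchor variable always receives a unit scalar.
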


We will henceforth refer to Lemma \ref{quartic_hensels_lemma} simply as \textit{Hensel's Lemma}.

In order to see how contractions behave in the four chosen fields, we compute the fourth powers modulo $\pi^7$.

\begin{lemma}
\label{powers}
The unit fourth powers modulo $\pi^7$ are

$\begin{cases}
1, 1+\pi^5 & \text{if } K \in \{\mathbb{Q}_2(\sqrt{2}), \mathbb{Q}_2(\sqrt{10})\} \\
1, 1+\pi^5 + \pi^6 & \text{if } K \in \{\mathbb{Q}_2(\sqrt{-2}), \mathbb{Q}_2(\sqrt{-10})\}.
\end{cases}$

\end{lemma}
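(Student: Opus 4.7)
The plan is to reduce the list of residues of $u \in \mathcal{O}^\times$ whose fourth powers need to be computed, and then expand binomially in each field using the representations of $2$ from Table \ref{fields}. The first step is a stability argument modulo $\pi^3$: because the residue field is $\mathbb{F}_2$, every unit can be written $u = u'(1 + \pi^3 w)$ with $u' \in \{1,\, 1+\pi,\, 1+\pi^2,\, 1+\pi+\pi^2\}$ and $w \in \mathcal{O}$. Since $v(2) = 2$, we have $v(4) = 4$ and $v(6) = 2$, so the four nontrivial terms of
$$(1 + \pi^3 w)^4 = 1 + 4\pi^3 w + 6\pi^6 w^2 + 4\pi^9 w^3 + \pi^{12} w^4$$
have valuations at least $7,8,13,12$. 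Hence $(1 + \pi^3 w)^4 \equiv 1 \pmod{\pi^7}$, so $u^4 \equiv (u')^4 \pmod{\pi^7}$, and it suffices to check the four representatives.

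A further reduction comes from $(1+\pi)(1+\pi^2) = 1+\pi+\pi^2+\pi^3 \equiv 1+\pi+\pi^2 \pmod{\pi^3}$, which combined with the previous step gives $(1+\pi+\pi^2)^4 \equiv (1+\pi)^4(1+\pi^2)^4 \pmod{\pi^7}$, so only $(1+\pi)^4$ and $(1+\pi^2)^4$ need explicit computation. For the second, $(1+\pi^2)^4 = 1 + 4\pi^2 + 6\pi^4 + 4\pi^6 + \pi^8$; the last two terms have valuation $\ge 8$ and drop out. A short direct check from Table \ref{fields} gives $4 \equiv \pi^4 \pmod{\pi^7}$ in each of the four fields, and $6\pi^4 = 2\pi^4 + 4\pi^4 \equiv \pi^6 \pmod{\pi^7}$ (since $2\pi^4 = \pi^6 u_2 \equiv \pi^6$ and $4\pi^4 \equiv \pi^8 \equiv 0$, where $u_2 := 2/\pi^2$). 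Hence $(1+\pi^2)^4 \equiv 1 + \pi^6 + \pi^6 \equiv 1 \pmod{\pi^7}$. For the first, $(1+\pi)^4 = 1 + 4\pi + 6\pi^2 + 4\pi^3 + \pi^4$ reduces modulo $\pi^7$ (using $4\pi^3 \equiv \pi^7 \equiv 0$) to $1 + \pi^5 + \pi^4 + 2\pi^2 + \pi^6$. The only field-dependent quantity is $2\pi^2 = \pi^4 u_2$. From Table \ref{fields}, $u_2 \equiv 1 \pmod{\pi^3}$ for $\pi \in \{\sqrt{2},\sqrt{10}\}$, giving $2\pi^2 \equiv \pi^4$ and $(1+\pi)^4 \equiv 1 + \pi^5 + 2\pi^4 + \pi^6 \equiv 1 + \pi^5$; while $u_2 \equiv 1 + \pi^2 \pmod{\pi^3}$ for $\pi \in \{\sqrt{-2},\sqrt{-10}\}$, giving $2\pi^2 \equiv \pi^4 + \pi^6$ and $(1+\pi)^4 \equiv 1 + \pi^5 + 2\pi^4 + 2\pi^6 \equiv 1 + \pi^5 + \pi^6$. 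Together with $(1+\pi+\pi^2)^4 \equiv (1+\pi)^4$, this yields exactly the two unit fourth powers claimed in each case.

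The only delicate bookkeeping is tracking which higher-order terms of $2$ survive modulo $\pi^7$; the stability argument in the first step keeps this manageable by discarding everything coming from the $\pi^3$-tail of $u$ at the outset, so the entire computation depends only on $u_2 = 2/\pi^2$ modulo $\pi^3$.
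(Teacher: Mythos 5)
Your proof is correct and takes essentially the same approach as the paper: a direct binomial expansion of the fourth power, reduced modulo $\pi^7$ using the table's representation $2 = \pi^2 u$ with $u \equiv 1$ or $1+\pi^2 \pmod{\pi^4}$. The only difference is organizational --- you first cut down to the four coset representatives of $\mathcal{O}^\times$ modulo $1+\pi^3\mathcal{O}$ and use multiplicativity, whereas the paper expands $(1+a\pi)^4$ for a general unit in one pass; both computations correctly isolate the dependence on $2/\pi^2$ and show that each claimed value is actually attained.
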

\begin{proof}
Let $2 = \pi^2u$, where $\pi$ is as in Table \ref{fields} and $u \in \mathcal{O}^\times$, $a = a_0 + a_1\pi + a_2\pi^2 + \ldots \in \mathcal{O}$ with $a_i \in \{0,1\}$.  Note that for the four fields considered, $2 \equiv \pi^2 \pmod{\pi^4}$, i.e., $u \equiv 1 \pmod{\pi^2}$.  Then

\begin{align*}
    (1 + a\pi)^4 & = 
    1 + 4a\pi + 6a^2\pi^2 + 4a^3\pi^3 + a^4\pi^4\\
    &=1 + u^2a\pi^5 + (ua^2\pi^4 + u^2a^2\pi^6) + u^2a^3\pi^7 + a^4\pi^4 \\
    & \equiv 1 + (ua^2 + a^4)\pi^4 + u^2a\pi^5 + u^2a^2\pi^6 \\
    & \equiv 1 + (ua^2 + a^4)\pi^4 + a\pi^5 + a\pi^6 \\
    & \equiv 1 + (ua_0 + ua_1\pi^2 + a_0)\pi^4 + (a_0 + a_1\pi)\pi^5 + a_0\pi^6 \\
    & \equiv 1 + (u+1)a_0\pi^4 + a_0\pi^5 + a_0\pi^6
    \pmod{\pi^7}.
\end{align*}

If $K \in \{\mathbb{Q}_2(\sqrt{2}), \mathbb{Q}_2(\sqrt{10})\}$, then $u \equiv 1 \pmod{\pi^4}$, and so $$(1 + a\pi)^4 \equiv 1 + 2a_0\pi^4 + a_0\pi^5 + a_0 \pi^6 \equiv 1+a_0\pi^5\pmod{\pi^7}.$$ If $K \in \{\mathbb{Q}_2(\sqrt{-2}), \mathbb{Q}_2(\sqrt{-10})\}$, then $u \equiv 1 + \pi^2 \pmod{\pi^4}$, and so $$(1 + a\pi)^4 \equiv 1 + (2 +\pi^2)a_0\pi^4 + a_0\pi^5 + a_0 \pi^6 \equiv 1+a_0\pi^5+a_0\pi^6\pmod{\pi^7}.$$
\end{proof}

Consequently, there exists an $\alpha \in \mathcal{O}$ such that $\alpha^4 \equiv 1 + \pi^5 \pmod{\pi^6}$.  Now, suppose $$a_0\beta_0^4 + a_1\beta_1^4 = b_0 + b_1\pi + b_2\pi^2 + \ldots + b_5\pi^5 + \ldots.$$
with $a_i,\beta_i \in \mathcal{O}^\times$.
Then $$a_0\beta_0^4 + a_1(\beta_1\alpha)^4 = b_0 + b_1\pi + b_2\pi^2 + \ldots + (1+b_5)\pi^5 + \ldots.$$

If $b_5 = 1$, then $(1 + b_5) \equiv 0 \pmod{\pi}$.  Thus the coefficient of the $\pi^5$ term gets switched from 0 to 1, or vice-versa, possibly changing the values of the higher order terms, while leaving the lower order terms unchanged.  The consequence of this is that for a variable formed by the corresponding contraction, we may choose the coefficient of the $\pi^5$ term. (Here the level of the term $\pi^5$ is considered relative to the level of the original variables $x_0$ and $x_1$, not the resulting variable.)

We will say that a variable is \textit{free} at some level if the coefficient corresponding to that level can be chosen in this way.  Further, any variable resulting from a contraction or series of contractions involving a variable \textit{free at level k} will retain this very useful property.  Because of this, we will (for the sake of convenience) refer to the originating variables as free at this level also, even though they are not the result of a contraction.

As an example, consider a pair of variables in level 0 which could be contracted to a variable in level 3, and thence contracted with a variable there to one in level 5, i.e., one with a $\pi^5$-coefficient of 1 (relative to level 0).  By choosing to form the contraction in the way described so the $\pi^5$-coefficient is 0, we instead contract to a variable which has level at least 6.  (One may thus think of getting level 5 ``for free").  Similarly, if after the same series of contractions we would create a variable in a level greater than 5, we may instead arrange for the new variable to be precisely in level 5.

We now add to this notion of free variable the basic contractions which can be performed, depending only on the $\pi$- and $\pi^2$-coefficients of a pair of variables in the same level.

The results of the following two lemmas will be used frequently and without reference.

\begin{lemma}\label{contractions}~
\begin{enumerate}
    \item Two variables in the same level with differing $\pi$-coefficients can be contracted to a variable exactly one level up.
    
    \item Two variables in the same level with the same $\pi$- and $\pi^2$-coefficients can be contracted to a variable exactly two levels up.
    
    \item Two variables in the same level with the same $\pi$-coefficient and differing $\pi^2$-coefficients can be contracted to a variable at least three levels up.
    
    \item If a pair of variables in level $k$ are contracted, the resulting variable is free at level $k+5$.
\end{enumerate}
\end{lemma}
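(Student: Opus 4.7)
The plan is to take the most basic contraction, $x_1 = x_2 = y$, and analyze the valuation of $a_1+a_2$ directly. Writing $a_i = \pi^k(1 + c_{i,1}\pi + c_{i,2}\pi^2 + \cdots)$ with $c_{i,j}\in\{0,1\}$ and using $2 = \pi^2 u$ with $u \in \mathcal{O}^\times$---where Table \ref{fields} shows that $u \equiv 1 \pmod{\pi^2}$ for each of the four fields under consideration---one obtains
\[
a_1 + a_2 \;\equiv\; \pi^{k+1}(c_{1,1}+c_{2,1}) \;+\; \pi^{k+2}\bigl(u + c_{1,2}+c_{2,2}\bigr) \pmod{\pi^{k+3}},
\]
since the substitution $2 = \pi^2 u$ absorbs the constant $2$ into level $k+2$ while the carry $2\pi = \pi^3 u$ promotes any parity-$2$ contribution from $c_{1,1}+c_{2,1}$ up to level $k+3$.

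Parts (1)--(3) then follow by an elementary parity analysis on $c_{1,1}+c_{2,1}$ and $c_{1,2}+c_{2,2}$, each of which lies in $\{0,1,2\}$. For (1), $c_{1,1}+c_{2,1}=1$ is a unit, giving level exactly $k+1$. For (2), both sums lie in $\{0,2\}$: the level $k+1$ contribution is suppressed modulo $\pi^{k+3}$, and the level $k+2$ coefficient $u + (c_{1,2}+c_{2,2})\in\{u, u+2\}$ is a unit in either case since $u+2 = u(1+\pi^2)$; hence the level is exactly $k+2$. For (3), $c_{1,1}+c_{2,1}$ is even and $c_{1,2}+c_{2,2}=1$, so the level $k+2$ coefficient $u+1$ lies in $\pi^2\mathcal{O}$ because $u\equiv 1\pmod{\pi^2}$, and both the $k+1$ and $k+2$ contributions are therefore pushed at least two further levels, yielding level at least $k+3$.

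For (4), I appeal directly to the discussion immediately preceding the lemma. Lemma \ref{powers} supplies $\alpha\in\mathcal{O}^\times$ with $\alpha^4 \equiv 1+\pi^5 \pmod{\pi^6}$, and replacing the contraction $a_1\beta_1^4 + a_2\beta_2^4$ by $a_1\beta_1^4 + a_2(\beta_2\alpha)^4$ alters its value by $a_2\beta_2^4(\alpha^4-1)$, which is congruent to $\pi^{k+5}$ times a unit modulo $\pi^{k+6}$. This toggles the $\pi^{k+5}$-coefficient of the resulting variable without disturbing any lower one---the precise content of being free at level $k+5$.

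The main obstacle is not conceptual but careful bookkeeping: one must correctly track how $2 = \pi^2 u$ cascades carries upward across levels, and in part (3) recognize that the extra advance to level $k+3$ is precisely a consequence of $u\equiv 1\pmod{\pi^2}$, which is the uniform feature of the four fields under consideration.
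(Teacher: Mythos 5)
Your proposal is correct and follows essentially the same route as the paper: a direct computation of $a_1+a_2$ using $2=\pi^2u$ with $u\equiv 1\pmod{\pi^2}$, a parity analysis of the digit sums for parts (1)--(3), and for part (4) the toggling of the $\pi^{k+5}$-coefficient via $\alpha^4\equiv 1+\pi^5\pmod{\pi^6}$ from Lemma \ref{powers}, exactly as in the discussion preceding the lemma. Your bookkeeping (e.g., $u+2=u(1+\pi^2)$ and $u+1\in\pi^2\mathcal{O}$) is slightly more explicit than the paper's but substantively identical.
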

\begin{proof}

$(1 + a_1\pi + a_2\pi^2) + (1 + b_1\pi + b_2\pi^2) \equiv 2  + (a_1 + b_1)\pi + (a_2 + b_2)\pi^2  \equiv (a_1 + b_1)\pi + (1 + a_2 + b_2)\pi^2 \pmod{\pi^3}$.

\bigskip

(1) If $a_1 \ne b_1$, then $(a_1 + b_1)\pi + (1 + a_2 + b_2)\pi^2 \equiv \pi \pmod{\pi^2}$.

\bigskip

(2) If $a_1 = b_1$ and $a_2 = b_2$, then $(a_1 + b_1)\pi + (1 + a_2 + b_2)\pi^2 \equiv 2\pi + 3\pi^2 \equiv \pi^2 \pmod{\pi^3}$.

\bigskip

(3) If $a_1 = b_1$ and $a_2 \ne b_2$, then $(a_1 + b_1)\pi + (1 + a_2 + b_2)\pi^2 \equiv 2\pi + 2\pi^2 \equiv 0 \pmod{\pi^3}$.

\bigskip

Statement (4) follows from the discussion above.
\end{proof}

\begin{lemma}
\label{quartic_pigeonhole}
If there are at least three variables in the same level, a pair may be contracted at least two levels.  If there are at least five variables in the same level, a pair may be contracted exactly two levels.
\end{lemma}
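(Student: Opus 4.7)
The proof is a direct pigeonhole argument based on the three cases of Lemma \ref{contractions}. Recall that in the representation $c = c_0 + c_1 \pi + c_2 \pi^2 + \cdots$ used throughout this paper, each coefficient $c_i$ lies in $\{0,1\}$, so for any variable, its $\pi$-coefficient takes one of two values and its $(\pi\text{-coefficient}, \pi^2\text{-coefficient})$ pair takes one of four values.

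For the first claim, among any three variables in the same level, the $\pi$-coefficients lie in a two-element set, so by pigeonhole at least two of them share the same $\pi$-coefficient. Applying Lemma \ref{contractions}(2) if their $\pi^2$-coefficients also agree, or Lemma \ref{contractions}(3) if they differ, this pair may be contracted to a variable at least two levels higher.

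For the second claim, among any five variables in the same level, the $(\pi\text{-coefficient}, \pi^2\text{-coefficient})$ pairs take values in a four-element set, so by pigeonhole at least two variables share both their $\pi$- and $\pi^2$-coefficients. Then Lemma \ref{contractions}(2) applies directly, producing a contraction to a variable exactly two levels higher.

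There is no real obstacle here; the argument is essentially bookkeeping. The only subtle point is that we must rely on the normalization convention $c_i \in \{0,1\}$ that makes the $\pi$- and $\pi^2$-coefficients binary, so that the pigeonhole thresholds of $3$ and $5$ are tight.
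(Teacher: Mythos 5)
Your proof is correct and is essentially the same pigeonhole argument as the paper's, just spelled out in slightly more detail (the paper simply notes "The contractions follow from Lemma \ref{contractions}" where you explicitly invoke parts (2) and (3)).
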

\begin{proof}
By the pigeonhole principle, among three variables there are two with the same $\pi$-coefficient, and among five variables, there are two with the same $\pi,\pi^2$-coefficients.  The contractions follow from Lemma \ref{contractions}.
\end{proof}

The following is a more complicated contraction involving four variables.

\begin{lemma} \label{con4}
Suppose there are four variables in the same level $k$ having the same $\pi$-coefficient, and which can be used to form two pairs, the variables of each pair being in the same $\pi^2,\pi^3$-coefficient class.  Then those four variables can be contracted to a variable at least four levels up which is free at level $k+5$.
\end{lemma}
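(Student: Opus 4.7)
The plan is to carry out the contractions in two stages. First, within each pair the two variables agree in their $\pi$-coefficient (by the hypothesis that all four variables share a $\pi$-coefficient) and in their $\pi^2$-coefficient (by the pair-matching hypothesis). Hence Lemma \ref{contractions}(2) applies to each pair and contracts it to a new variable at exactly level $k+2$; call these $y_1$ and $y_2$. Since each of these two contractions was performed at level $k$, Lemma \ref{contractions}(4) makes both $y_1$ and $y_2$ free at level $k+5$.

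The crux is to identify the $\pi$-coefficient of $y_1$ and $y_2$ at their new level. Writing $2 = \pi^2 u$ with $u \equiv 1 \pmod{\pi^2}$ (a fact used in all four fields in the proof of Lemma \ref{powers}), a direct expansion of a pair-sum $(1 + a_1\pi + a_2\pi^2 + a_3\pi^3 + \ldots) + (1 + a_1\pi + a_2\pi^2 + a_3\pi^3 + \ldots)$ through $\pi^{k+3}$ and division by $\pi^{k+2}$ shows that the resulting level-$(k+2)$ unit has the form $1 + a_1\pi + \ldots$; that is, the $\pi$-coefficient of the contracted variable at level $k+2$ coincides with the common $\pi$-coefficient $a_1$ of its originating pair at level $k$. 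Because all four original variables share the same $\pi$-coefficient, $y_1$ and $y_2$ share a $\pi$-coefficient at level $k+2$.

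One further contraction finishes the argument. At level $k+2$ the pair $\{y_1,y_2\}$ has matching $\pi$-coefficients, so Lemma \ref{contractions}(2) (if their $\pi^2$-coefficients agree) or \ref{contractions}(3) (if they differ) contracts it to a variable at level at least $(k+2)+2 = k+4$. Since $y_1$ is free at level $k+5$ and the final variable arises from a contraction involving $y_1$, the preservation remark stated just before Lemma \ref{contractions} ensures that the final variable remains free at level $k+5$, yielding both assertions of the lemma.

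The main obstacle is the $\pi$-coefficient bookkeeping in the first stage: it is what guarantees that the second contraction begins from a matched-$\pi$-coefficient pair and thus reaches level at least $k+4$. The computation collapses cleanly because $u \equiv 1 \pmod{\pi^2}$ in each of the four fields under consideration; in a field where this failed, the $\pi$-coefficient of $y_i$ at level $k+2$ would pick up an extra contribution from $u_1$ and the argument would break down.
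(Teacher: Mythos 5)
Your proof is correct, but it takes a genuinely different route from the paper. The paper's proof is a single four-variable contraction: it sums all four unit coefficients at once, obtaining $a + b + c + d = 4 + 4a_1\pi + 2(a_2+c_2)\pi^2 + 2(a_3+c_3)\pi^3 + \cdots$, and then reads off $\equiv 0 \pmod{\pi^4}$ directly from $4 = u^2\pi^4$ and $2 = u\pi^2$, with freeness at $k+5$ following exactly as in Lemma \ref{contractions}(4). You instead contract in two stages (each pair to level $k+2$, then the two resulting variables together), which requires the extra bookkeeping step of verifying that the $\pi$-coefficient at level $k+2$ equals the original $\pi$-coefficient $a_1$; that computation is right, and it does hinge on $u \equiv 1 \pmod{\pi^2}$ as you observe, since otherwise $2a_1\pi = ua_1\pi^3$ would contribute a $u_1 a_1$ term to the $\pi^2$ slot of the pair sum and a $u_1$ term to the new unit's $\pi$-coefficient, breaking the matching. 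The trade-off is that your argument reuses the existing pairwise machinery of Lemma \ref{contractions} throughout at the cost of an intermediate $\pi$-coefficient lemma, while the paper's one-shot computation is shorter and avoids tracking coefficients across stages; both establish the same ``at least $k+4$, free at $k+5$'' conclusion.
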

\begin{proof}
Let $a,b,c,d \in \mathcal{O}^\times$, with $a = 1 + a_1\pi + a_2\pi^2 + \ldots$, and the other units following the analogous convention, and with $(a,b), (c,d)$ forming the pairs of the hypothesis.  Then,
\begin{align*}
    a + &b + c + d = 
    4 + 4a_1\pi + 2(a_2 + c_2)\pi^2 + 2(a_3 + c_3)\pi^3 + (a_4 + b_4 + c_4 + d_4)\pi^4 + \ldots \\
    & = u^2\pi^4 + u^2a_1\pi^5 + u(a_2 + c_2)\pi^4 + u(a_3 + c_3)\pi^5 + (a_4 + b_4 + c_4 + d_4)\pi^4 + \ldots \\
    & \equiv 0 \pmod{\pi^4}
\end{align*}
The variable formed from this contraction is free at level $k+5$ by the same calculation used to prove Lemma \ref{contractions} (4).
\end{proof}

\begin{lemma} \label{quartic_slide}
Suppose that (\ref{quartic_form}) has two variables in level $k$, and for each of levels $k+1$, $k+2$, \ldots, $k+t-1$, either one of the two chosen variables in level $k$ is free at that level, or there is a distinct variable at that level.

Then contractions can be performed to produce a variable at level at least $k+t$.  Alternatively, contractions can be performed to produce a variable exactly at any level at which any of the initial variables was free.
\end{lemma}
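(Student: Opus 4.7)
The plan is to induct on $j = 1, \ldots, t$, producing at each step a single variable $y_j$ at level at least $k+j$ that inherits every freeness property carried by the two originals. For the base case, contracting the two originals at level $k$ via Lemma \ref{contractions} yields $y_1$ at level at least $k+1$, and the freeness inheritance discussion preceding that lemma guarantees that $y_1$ carries over all relevant freeness.

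For the inductive step, suppose $y_j$ lies at level at least $k+j$ with $j \le t-1$. If its level is strictly greater than $k+j$, set $y_{j+1} := y_j$ and the invariant survives trivially. Otherwise $y_j$ sits at exactly level $k+j$, and the hypothesis of the lemma applies at that level in one of two ways. If a distinct variable $z$ is available at level $k+j$, then Lemma \ref{contractions} applied to $y_j$ and $z$ produces $y_{j+1}$ at level at least $k+j+1$. If instead one of the originals is free at level $k+j$, that freeness is inherited by $y_j$, so the coefficient of $\pi^{k+j}$ in $y_j$ admits a binary toggle; choosing it to equal $0$ promotes $y_j$ itself to level at least $k+j+1$. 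In either sub-case the freeness invariant is preserved for $y_{j+1}$, and iterating to $j = t$ yields the desired variable at level at least $k+t$.

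For the alternative clause, to land exactly at a level $\ell$ at which an initial variable is free, run the slide until the running variable first reaches level at least $\ell$; the freeness at $\ell$ inherited along the way then lets us set the $\pi^\ell$ coefficient equal to $1$, forcing the variable to have level precisely $\ell$. The main subtlety is recognizing that the binary toggle at a free level supplies exactly the two-way choice — jump past $\ell$ versus stop at $\ell$ — that the statement requires. Since the preceding discussion has already established that freeness propagates through any contraction involving a free variable, this bookkeeping is entirely routine, and no further calculation is needed beyond appealing to Lemma \ref{contractions} at each step.
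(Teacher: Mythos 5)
Your proof is correct and follows essentially the same approach as the paper's (which is just a two-sentence sketch of repeated contraction, bypassing or stopping at free levels); your version merely makes the induction and the freeness-toggle bookkeeping explicit.
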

\begin{proof}
Any two variables in the same level can be contracted to a variable at least one level higher.  By repeated contractions, bypassing (or stopping at) levels on which the variables are free, we obtain the desired variable.
\end{proof}

\section{Archetypal Forms}

\begin{lemma}
\label{3021}
If (\ref{quartic_form}) is of type $(3,0,2,1)$, then it has a nontrivial zero.
\end{lemma}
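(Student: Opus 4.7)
The plan is to produce, via a carefully chosen sequence of contractions, a variable at level at least $7$, after which Lemma \ref{quartic_hensels_lemma} (Hensel's Lemma) produces a nontrivial zero. Throughout, I will take the six guaranteed variables (three at level 0, two at level 2, one at level 3) and repeatedly pair variables at a common level, recording the inherited ``free at level $5$'' property wherever it is created by Lemma \ref{contractions}(4).

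First I would apply the pigeonhole principle to the three variables at level $0$: at least two share a $\pi$-coefficient, so Lemma \ref{contractions} yields a contraction landing exactly at level $2$ (if the $\pi^2$-coefficients also match) or at level at least $3$ (if they differ), and in either case Lemma \ref{contractions}(4) guarantees the new variable is free at level $5$. This splits the argument into two main branches.

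In the branch where the first contraction lands at level $2$, there are now three variables at level $2$ (two originals plus the new one, free at $5$); a second pigeonhole gives a further contraction to level at least $3$, and this output can be paired with the original level-$3$ variable for a further upward contraction. In the branch where the first contraction lands at level $3$ or above, I would immediately pair it with the original level-$3$ variable (when applicable) and continue contracting, using the two level-$2$ originals to produce an additional variable at level at least $3$. In each branch, once I have arranged two variables at a common intermediate level $k$, Lemma \ref{quartic_slide} can be invoked, with the remaining variables (together with the inherited free-at-level-$5$ property) serving as the bridges at levels $k+1, k+2, \ldots, 6$; a sub-case analysis on the $\pi$- and $\pi^2$-coefficients at each stage shows that some contraction always lands at level at least $7$.

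The main obstacle is the tight contraction budget: with only six variables satisfying the type $(3,0,2,1)$, at most a handful of contractions are available, and levels $4$ and $6$ have no native distinct variables and no native free property. The free-at-level-$5$ property is therefore crucial, as it permits a would-be level-$5$ contraction to be pushed to level at least $6$ and thereby unlocks the final climb; managing this bookkeeping carefully across the $\pi$- and $\pi^2$-coefficient patterns of the initial variables is where the technical heart of the proof lies.
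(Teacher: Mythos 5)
Your overall framework---contract a pair from level $0$, then invoke Lemma~\ref{quartic_slide} to push the resulting variable up to level $7$ using the remaining variables and the free-at-level-$5$ property as bridges---is the same as the paper's. However, the specific execution of your first branch has a gap. After the initial contraction lands at level $2$, you propose a \emph{second} pigeonhole contraction among the three level-$2$ variables. Two problems arise. First, the pigeonhole selects two variables with the same $\pi$-coefficient, so by Lemma~\ref{contractions} this contraction lands at level at least $4$, not level $3$; the output therefore cannot be ``paired with the original level-$3$ variable,'' contrary to what you assert. Second, and more seriously, the pigeonhole does not control whether the selected pair includes the variable carrying the free-at-level-$5$ property. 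If the two \emph{original} level-$2$ variables happen to share a $\pi$-coefficient while the contracted one differs, your second contraction combines the two originals into a variable at level $\ge 4$ that is free only at level $7$, not at level $5$. At that point levels $4$ and $6$ are covered (by the leftover level-$0$ variable shifted to $4$ and the free-at-$5$ variable sitting at $2\equiv 6$), but nothing covers level $5$: the only variable free at $5$ is isolated at level $2$ with no partner, and the slide from level $4$ breaks down. You gesture at ``a sub-case analysis on the $\pi$- and $\pi^2$-coefficients'' to close this, but as written the argument does not close it.

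The paper sidesteps this entirely by \emph{not} performing a second contraction inside level $2$. After the single contraction from level $0$ to some level $k\ge 2$ (which can be assumed $\ne 5$ since the new variable is free at level $5$), the two level-$2$ originals are distributed as distinct bridges at levels $2$ and $6$, the leftover level-$0$ variable becomes the bridge at level $4$, and the level-$3$ variable is the bridge at level $3$. For every admissible $k\in\{2,3,4,6\}$, level $k$ then holds two variables (the contracted one and the bridge already there), and each of levels $k+1,\dots,6$ is either occupied by a distinct bridge or is the free level $5$. One application of Lemma~\ref{quartic_slide} then finishes. The lesson is that the extra contraction you introduce consumes bridges and can strand the free-at-$5$ property, whereas holding those variables in reserve keeps the slide hypotheses intact.
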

\begin{proof}
Contract a pair from level 0 to a variable at least two levels higher, say level $k$.  If $k \ge 7$, then a zero follows from Hensel's Lemma.  Otherwise, we will use Lemma \ref{quartic_slide} to produce a variable in level at least 7.

There is at least one variable remaining in level $0 \equiv 4 \pmod{d}$.  Treat this as a variable in level 4.  The variable resulting from the contraction is free at level 5, and so we may assume that $k \ne 5$.  The two variables in level 2 can be treated as one variable in level 2 and one in level 6.  Thus level $k$ contains two variables (including the one produced by the contraction) and levels $k+1$ through 6 either contain a variable or are free with respect to the variable formed by the initial contraction.  A zero follows from Lemma \ref{quartic_slide} and Hensel's Lemma.
\end{proof}

\begin{corollary}
\label{5011}
If (\ref{quartic_form}) is of type $(5,0,1,1)$, then it has a nontrivial zero.
\end{corollary}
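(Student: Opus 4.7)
The plan is to reduce the form to an instance of Lemma \ref{3021} by performing a single contraction at level $0$. Since there are at least five variables at level $0$, Lemma \ref{quartic_pigeonhole} guarantees a pair of variables in level $0$ whose $\pi$-coefficients and $\pi^2$-coefficients agree; by Lemma \ref{contractions}(2) this pair contracts to a variable at level exactly $2$.

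After performing this contraction, at least $5-2 = 3$ variables remain at level $0$, the contracted variable joins the existing variable at level $2$ to give at least $2$ variables at level $2$, and the variable at level $3$ is untouched. The resulting form is therefore of type $(3,0,2,1)$, and a nontrivial zero is produced by Lemma \ref{3021}.

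The only subtlety to check is that the pigeonhole contraction lands \emph{exactly} at level $2$ rather than higher; this is essential because Lemma \ref{3021} requires that its two level-$2$ variables genuinely sit there (one of them being treated as a level-$6$ variable in its proof). Lemma \ref{quartic_pigeonhole} supplies exactly this precision when at least five variables are available at the same level, so no further obstacle arises.
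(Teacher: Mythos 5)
Your proof is correct and follows the paper's own argument: invoke Lemma \ref{quartic_pigeonhole} (via Lemma \ref{contractions}(2)) to contract a pair in level $0$ exactly two levels up, producing a form of type $(3,0,2,1)$, and finish with Lemma \ref{3021}. The remark about needing the contraction to land \emph{exactly} at level $2$ is a sound observation—$(3,0,1,2)$ is not a cyclic permutation of $(3,0,2,1)$—but this is precisely what the five-variable case of Lemma \ref{quartic_pigeonhole} is designed to deliver, as you note.
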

\begin{proof}
By Lemma \ref{quartic_pigeonhole}, contract a pair from level 0 to level 2.  The resulting form is of type $(3,0,2,1)$ and a zero follows from Lemma \ref{3021}.
\end{proof}

\begin{corollary}
\label{7001}
If (\ref{quartic_form}) is of type $(7,0,0,1)$, then it has a nontrivial zero.
\end{corollary}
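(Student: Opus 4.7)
The plan is to reduce type $(7,0,0,1)$ directly to type $(5,0,1,1)$ and then invoke Corollary \ref{5011}. With seven variables at level $0$, we have more than the five required to apply the second assertion of Lemma \ref{quartic_pigeonhole}, so we can extract a pair of variables in level $0$ sharing both their $\pi$-coefficient and their $\pi^2$-coefficient and contract them to a variable \emph{exactly} two levels up, i.e.\ at level $2$.

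After this single contraction, five variables remain at level $0$, the new variable sits at level $2$, and the original variable at level $3$ is untouched. The resulting form is of type $(5,0,1,1)$, and Corollary \ref{5011} produces a nontrivial zero.

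The only subtlety worth flagging is that it is essential here to use the ``exactly two levels'' half of Lemma \ref{quartic_pigeonhole}: if the contraction landed at level $\ge 3$, we would instead obtain a form of type $(5,0,0,\ast)$ with two variables at level $3$ (and possibly higher), which is not directly covered by Corollary \ref{5011}. Having seven variables in level $0$ guarantees the stronger conclusion, so no further case analysis is required. There is no real obstacle; the statement is essentially a one-step consequence of the preceding corollary.
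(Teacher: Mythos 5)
Your proof is correct and is essentially the paper's own argument: contract a pair from level $0$ exactly two levels up (possible by Lemma \ref{quartic_pigeonhole} since $7 \ge 5$) and apply Corollary \ref{5011} to the resulting form of type $(5,0,1,1)$. Your remark about needing the ``exactly two levels'' version is a fair point of care, but it does not change the route.
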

\begin{proof}
Contract a pair from level 0 to level 2.  A zero follows from Corollary \ref{5011}.
\end{proof}

\begin{lemma}
\label{6010}
If (\ref{quartic_form}) is of type $(6,0,1,0)$, then it has a nontrivial zero.
\end{lemma}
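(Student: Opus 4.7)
The plan is to follow the strategy of Lemma~\ref{3021}: perform an initial contraction at level 0 to produce a variable at some level $k \ge 2$, and then apply Lemma~\ref{quartic_slide} from level $k$ to reach level $\ge 7$, at which point Hensel's Lemma gives a zero. The challenge compared to $(3,0,2,1)$ is that we have no variable at level 3, so we must use the extra level-0 variables both to fuel more contractions and to fill slide slots via the cyclic substitution (treating level-0 variables as level-4 variables and the level-2 variable as a level-6 variable).

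First, I would use pigeonhole on the 6 level-0 variables, which fall into at most 4 classes by $(\pi,\pi^2)$-coefficient, to find a pair with matching $(\pi,\pi^2)$ and contract it to level 2 exactly by Lemma~\ref{contractions}(2), yielding a new variable at level 2 that is free at level 5 by Lemma~\ref{contractions}(4). The form now has type $(4,0,2,0)$. Second, among the 4 remaining level-0 variables I would find another pair with matching $\pi$-coefficient (pigeonhole on 2 $\pi$-classes): if their $\pi^2$-coefficients also match, contract to level 2 exactly, producing three level-2 variables two of which are free at level 5; if their $\pi^2$-coefficients differ, contract to level at least 3, producing a variable at level 3, 4, or 6 (the case $k = 5$ is absorbed into $k \ge 6$ via the $\pi^5$-freedom, and $k \ge 7$ finishes by Hensel directly). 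In each resulting configuration I would apply Lemma~\ref{quartic_slide} from level 2, with slots at levels 3, 4, 5, 6 filled respectively by a contraction variable at level 3, the remaining level-0 variables treated as level-4 variables, the $\pi^5$-freedom of the contracted base, and the original level-2 variable treated as a level-6 variable.

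The main obstacle is the subcase in which every contraction from level 0 happens to land at level 2 (that is, the 6 level-0 variables are so concentrated in one $(\pi,\pi^2)$-class that no pair with matching $\pi$ and differing $\pi^2$ is available), so no level-3 variable is produced by the pair-contractions considered above. In this subcase I would refine the pigeonhole to the $\pi^3$-coefficient of the heavily populated $(\pi,\pi^2)$-class, either contracting a third pair of level-0 variables to a higher level (including level 3 via a mixed pair) or invoking Lemma~\ref{con4} on 4 of the level-0 variables to obtain a single variable at level at least 4, free at level 5. Combining this variable with the original level-2 variable and the level-2 variables from the first contraction then gives the slide enough coverage to reach level $\ge 7$, completing the proof by Hensel's Lemma.
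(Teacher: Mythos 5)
Your overall strategy (contract at level 0, then slide to level 7) is the same as the paper's, but the slot bookkeeping in both of your main subcases breaks down, and in each case the paper does something structurally different precisely to avoid the problem you run into. In the subcase where both pairs contract to level 2, you propose a slide from level 2 whose level-3 slot is ``filled by a contraction variable at level 3''---but in that subcase no such variable was produced, and none of the three level-2 variables is free at level 3, so the slide stalls there. The paper instead applies the pigeonhole principle to the \emph{three} variables now sitting in level 2: two of them share a $\pi$-coefficient and hence contract at least two levels, to some $k \ge 4$, skipping level 3 entirely; the slide then starts at $k \in \{4,6\}$ with the two leftover level-0 variables serving at level 4, freedom at level 5, and the remaining level-2 variable at level 6.

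In the subcase where the second pair contracts to level $\ge 3$, you assign the original level-2 variable both to the slide's base at level 2 and to the level-6 slot; it cannot play both roles. If that contraction lands exactly at level 3, the resulting configuration (two variables at level 2, one at level 3, two at level 4, freedom at level 5, nothing at level 6) is only guaranteed to reach level 6, one short of what Hensel's Lemma requires. The paper avoids this by not spending a pair on a level-2 contraction in this situation: when the six level-0 variables fall $3+1+1+1$ across the $(\pi,\pi^2)$-classes, it forms \emph{two} disjoint pairs each contracting to level $\ge 3$, so that if both land at level 3 they contract each other up to level $\ge 4$, leaving the original level-2 variable free to fill the level-6 slot. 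Finally, your con4 fallback is not always available: Lemma~\ref{con4} needs two $\pi^3$-matched pairs among four variables sharing a $\pi$-coefficient, which fails when, say, the four split $3+1$ by $\pi^3$-coefficient; that configuration must be caught by the three-variables-in-level-2 argument above, not by Lemma~\ref{con4}. You should rebuild the case division along the paper's lines: either two disjoint pairs in level 0 share $(\pi,\pi^2)$-classes (handle via the level-2 pigeonhole), or the distribution is $3+1+1+1$ (handle via two level-$\ge 3$ contractions).
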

\begin{proof}
Suppose two pairs contract from level 0 to level 2.  The two resulting variables are free at level 5.  By the pigeonhole principle, two of the three variables now in level 2 have the same $\pi$-coefficient, and so a pair can be contracted to a variable in level $k \ge 4$.  Because this variable is also free at level 5, assume $k$ is either 4 or 6.  Because the remaining variables in levels 0 and 2 can be treated as variables in levels 4 and 6, respectively, level $k$ contains two variables and levels $k+1$ through 6 contain a variable or are free with respect to the variable in level $k$ resulting from the previous contractions.  A zero follows from Lemma \ref{quartic_slide} and Hensel's Lemma.

Thus assume there is at most one pair in level 0 having both variables in the same $\pi,\pi^2$-coefficient class.  It follows that there are three variables in some $\pi,\pi^2$-coefficient class, and one in each of the other three classes.  Thus there are two pairs which can be contracted to two variables at least three levels up, each free at level 5.  If any pair contracts at least four levels up, a zero follows as above from Lemma \ref{quartic_slide} (with $k=4$ or $k=6$) and Hensel's Lemma.  Thus assume both pairs contract to level 3.  A zero again follows from Lemma \ref{quartic_slide} (with $k=3$) and Hensel's Lemma.
\end{proof}

\begin{corollary}
\label{5030}
If (\ref{quartic_form}) is of type $(5,0,3,0)$, then it has a nontrivial zero.
\end{corollary}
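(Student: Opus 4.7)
The plan is to perform a single carefully chosen contraction among the three variables in level 2 and thereby reduce the form to one of the previously handled types, either $(5,0,1,1)$ or $(6,0,1,0)$. The case split is driven by the distribution of $\pi$-coefficients among those three level-2 variables, which each lie in $\{0,1\}$.

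If the three $\pi$-coefficients in level 2 are not all equal, then some pair of level-2 variables has distinct $\pi$-coefficients, and by Lemma \ref{contractions}(1) this pair contracts to a variable exactly one level up, landing in level 3. The resulting form still has the five variables in level 0, at least one remaining variable in level 2, and now a variable in level 3, so it is of type $(5,0,1,1)$, and Corollary \ref{5011} supplies a nontrivial zero.

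Otherwise, all three level-2 variables share a common $\pi$-coefficient. Their $\pi^2$-coefficients also lie in $\{0,1\}$, so by the pigeonhole principle two of them share a $\pi^2$-coefficient as well. Lemma \ref{contractions}(2) then contracts this pair to a variable exactly two levels up, at level 4, which is level $0$ modulo $d=4$. The resulting form has at least $5+1=6$ variables in level 0 and at least one remaining variable in level 2, placing it in type $(6,0,1,0)$; Lemma \ref{6010} then closes the argument. I expect no substantive obstacle: the $\pi$-coefficient case split is exhaustive, and each branch is a one-step reduction to a result already established earlier in the paper.
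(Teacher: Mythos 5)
Your proof is correct and follows essentially the same route as the paper: rule out distinct $\pi$-coefficients in level 2 via a contraction to level 3 and Corollary \ref{5011}, then use the pigeonhole principle to contract a pair two levels up to level $4 \equiv 0 \pmod{4}$ and finish with Lemma \ref{6010}. No issues.
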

\begin{proof}
By Corollary \ref{5011}, assume all variables in level 2 have the same $\pi$-coefficient, for otherwise a pair could be contracted to level 3, creating a form of type $(5, 0, 1, 1)$.  By the pigeonhole principle, two of these variables are in the same $\pi,\pi^2$-coefficient class.  Contract them to level $4 \equiv 0 \pmod{4}$.  A zero follows from Lemma \ref{6010}.
\end{proof}

\begin{corollary}
\label{8000}
If (\ref{quartic_form}) is of type $(8,0,0,0)$, then it has a nontrivial zero.
\end{corollary}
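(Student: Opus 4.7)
My plan is to reduce to a previously handled archetype with a single contraction, namely to the type $(6,0,1,0)$ already settled in Lemma \ref{6010}. Since there are $8$ variables at level $0$ and only $2^2 = 4$ possible $(\pi,\pi^2)$-coefficient classes available to them (each coefficient being $0$ or $1$), the pigeonhole principle—packaged as the "exactly two levels" clause of Lemma \ref{quartic_pigeonhole}, which applies whenever at least five variables share a level—produces a pair of variables at level $0$ agreeing in both their $\pi$- and $\pi^2$-coefficients. By Lemma \ref{contractions}(2), this pair contracts to a variable exactly two levels up, landing at level $2$.

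After this single contraction, six variables remain at level $0$ and one new variable sits at level $2$, so the resulting form is precisely of type $(6,0,1,0)$. A nontrivial zero then follows immediately from Lemma \ref{6010}, which in turn feeds back through Hensel's Lemma.

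The only subtlety I would flag is the need to contract \emph{exactly} to level $2$, rather than to level $2$ or higher: if the contraction were to overshoot and land at level $3$ or beyond, the resulting type $(6,0,0,1)$ is not among the archetypes already proved isotropic, so further reduction would be needed. The fact that Lemma \ref{quartic_pigeonhole} guarantees we can land exactly two levels up (not just at least two) is precisely what turns this corollary into a one-line reduction; there is no real obstacle beyond invoking it correctly.
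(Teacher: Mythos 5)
Your proof is correct and follows the same route as the paper: contract a pair from level $0$ exactly two levels up (justified by Lemma \ref{quartic_pigeonhole}, since $8 \ge 5$) to obtain a form of type $(6,0,1,0)$, then invoke Lemma \ref{6010}. Your explicit remark about needing the ``exactly two levels'' clause is a fair observation that the paper leaves implicit, but it is the same argument.
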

\begin{proof}
Contract a pair from level 0 to level 2.  A zero follows from Lemma \ref{6010}.
\end{proof}


\section{Remaining Cases}

\begin{lemma}
\label{quartic_max7}
Suppose that $s \ge 11$ and after normalization (\ref{quartic_form}) has at least seven variables in level 0.  Then (\ref{quartic_form}) has a nontrivial zero.  
\end{lemma}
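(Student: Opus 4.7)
The plan is to reduce, via case analysis on the normalized type $(s_0, s_1, s_2, s_3)$ with $s_0 \geq 7$, to one of the archetypal forms already known to admit a nontrivial zero. Discarding extra variables if necessary, I may assume $s = 11$, so $s_1 + s_2 + s_3 \leq 4$. If $s_0 \geq 8$ the form contains a subform of type $(8,0,0,0)$ and Corollary \ref{8000} finishes, so assume $s_0 = 7$ and $s_1 + s_2 + s_3 = 4$.

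I next scan the possible distributions of the four auxiliary variables. If $s_3 \geq 1$, the form contains type $(7,0,0,1)$ and Corollary \ref{7001} applies; if instead $s_3 = 0$ but $s_2 \geq 1$, the form contains type $(6,0,1,0)$ and Lemma \ref{6010} applies. The only remaining possibility is $s_1 = 4$, $s_2 = s_3 = 0$, i.e., type $(7,4,0,0)$, which is the single case requiring a fresh contraction argument.

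For type $(7,4,0,0)$ the plan is to perform one contraction among the four level-1 variables that lands the resulting form in a covered type. I would split on whether the four level-1 variables share a common $\pi$-coefficient. If not, choose a pair with differing $\pi$-coefficients; by Lemma \ref{contractions}(1) this contracts to \emph{exactly} level 2, yielding type $(7,2,1,0) \supseteq (6,0,1,0)$, finishing by Lemma \ref{6010}. If all four share a $\pi$-coefficient, pigeonhole over the two $\pi^2$-classes supplies a pair with matching $\pi$- and $\pi^2$-coefficients; Lemma \ref{contractions}(2) contracts such a pair to \emph{exactly} level 3, yielding type $(7,2,0,1) \supseteq (7,0,0,1)$, finishing by Corollary \ref{7001}.

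The main subtlety is ensuring that the single contraction used in the $(7,4,0,0)$ case lands at its intended level rather than being pushed higher: a contraction producing a new variable at level $5$, for example, would give type $(7,3,0,0)$, which none of the archetypal lemmas cover and which would force a substantially more involved slide-and-Hensel argument. Using the parts of Lemma \ref{contractions} that give \emph{exact} level increases (rather than the ``at least'' statement in part (3)) sidesteps this pitfall automatically, so the case analysis closes cleanly.
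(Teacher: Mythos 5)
Your proof is correct and follows essentially the same route as the paper: reduce via Corollary \ref{8000}, Corollary \ref{7001}, and Lemma \ref{6010} to the type $(7,4,0,0)$, then split on whether the four level-1 variables share a $\pi$-coefficient, contracting a pair to level 2 or (by pigeonhole on $\pi^2$-coefficients) to level 3. The only difference is presentational — your explicit attention to the exactness of the contractions in Lemma \ref{contractions}(1)--(2) is a nice touch the paper leaves implicit.
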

\begin{proof}
By Corollary \ref{8000}, assume level 0 contains exactly seven variables.  By Corollary \ref{7001}, assume level 3 is empty, and by Lemma \ref{6010}, assume level 2 is empty.  Then level 1 has at least four variables.  By Lemma \ref{6010}, assume they have the same $\pi$-coefficient.  By the pigeonhole principle, there is a pair in level 1 which can be contracted to level 3.  A zero follows from Corollary \ref{7001}.
\end{proof}

\begin{lemma}
\label{quartic_max6}
Suppose that $s \ge 11$ and after normalization (\ref{quartic_form}) has at least six variables in level 0.  Then (\ref{quartic_form}) has a nontrivial zero.  
\end{lemma}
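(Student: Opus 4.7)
The plan is to apply the preceding lemmas to reduce to a restricted form and then perform a case analysis. By Lemma \ref{quartic_max7}, we may assume $s_0 = 6$ exactly, since $s_0 \ge 7$ already gives a zero. By Lemma \ref{6010}, we may assume $s_2 = 0$, since any form with $s_0 \ge 6$ and $s_2 \ge 1$ has type $(6,0,1,0)$. Thus the form has type $(6, s_1, 0, s_3)$ with $s_1 + s_3 \ge 5$, and the task is to push the remaining variables into one of the archetypal configurations established in Section 3.

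The strategy is to leverage contractions within levels 1 and 3 to reach one of the archetypal types $(7,0,0,1)$, $(6,0,1,0)$, $(5,0,1,1)$, $(5,0,3,0)$, or $(3,0,2,1)$. Three moves do most of the work. A pair in level 3 with distinct $\pi$-coefficients contracts to level $4 \equiv 0$, giving $s_0 = 7$; combined with any leftover variable in level 3, Corollary \ref{7001} applies. A pair in level 3 with the same $\pi$- but different $\pi^2$-coefficient contracts to level $\ge 6 \equiv 2$, producing a variable at level 2 while keeping $s_0 = 6$, so Lemma \ref{6010} applies. Symmetrically, a pair in level 1 with the same $\pi$- but different $\pi^2$-coefficient contracts to level $\ge 4 \equiv 0$, boosting $s_0$ to 7. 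Whenever a level containing at least three variables has two variables not sharing the same $(\pi, \pi^2)$-coefficient class, one of these moves succeeds; pigeonhole on the four coefficient classes together with these moves, run through each value of $(s_1, s_3)$ with $s_1 + s_3 \ge 5$, disposes of most subcases.

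The principal obstacle is the degenerate regime in which every variable in level 1 and every variable in level 3 shares both $\pi$- and $\pi^2$-coefficients. Pair contractions within these levels then land exactly on level 3 (from level 1) or exactly on level 5 $\equiv 1$ (from level 3), merely shuttling variables back and forth while depleting the total count. To escape this cycle I would invoke Lemma \ref{con4}: pigeonhole on the $\pi^3$-coefficients among four or five matching variables produces the required $(4,0)$ or $(2,2)$ distribution, permitting a contraction at least four levels up with a free coefficient at level $k+5$. Careful tracking of this free level, combined with further contractions among the six variables in level 0 and with Hensel's Lemma \ref{quartic_hensels_lemma} to close the argument once any single variable reaches level $\ge 7$ relative to its starting level, should handle the remaining cases and complete the case analysis.
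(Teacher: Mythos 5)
Your opening reductions agree with the paper: by Lemma~\ref{quartic_max7} take $s_0 = 6$ exactly, by Lemma~\ref{6010} take $s_2 = 0$. But the paper then makes one more structural reduction you skip: normalization (Lemma~\ref{normalization}) with $s_0 = 6$, $s_2 = 0$, $s = 11$ forces $s_1 \ge 3$, and then Lemma~\ref{3021} (via the cyclic permutation $(1,3,0,2)$) disposes of any form with $s_3 \ge 2$. This collapses the whole analysis to just the two types $(6,4,0,1)$ and $(6,5,0,0)$. By instead chasing general $(s_1, s_3)$ with your ``three moves,'' you inherit a much larger case tree and, as written, you do not close it.

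The moves themselves leave concrete holes. Move one contracts a level-3 pair with distinct $\pi$-coefficients to level $4 \equiv 0$ and invokes Corollary~\ref{7001} ``combined with any leftover variable in level 3'' --- but if $s_3 = 2$ exactly there is no leftover, and the resulting type $(7, s_1, 0, 0)$ is not one of the archetypal forms (this is precisely the case the paper kills at the outset via Lemma~\ref{3021}). Move three contracts a level-1 pair (same $\pi$, different $\pi^2$) to level $\ge 4$ and claims $s_0$ becomes 7, but if it lands exactly at level 4 with $s_3 = 0$ you are at type $(7, s_1 - 2, 0, 0)$, again not archetypal; the paper recovers by pigeonholing a second pair from the remaining level-1 variables down to level 3 to reconstitute $(7,0,0,1)$, and you omit this. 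Move two's contraction could land at levels 7, 8, or 9 rather than 6; handling those requires exactly the free-level bookkeeping you defer. Finally, the degenerate regime --- all level-1 variables in one $(\pi,\pi^2)$-class --- is where the real work of the paper lives: Lemma~\ref{con4} produces a variable at level $\ge 5$ free at level 6, which must then be combined via Lemma~\ref{quartic_slide} with the leftover level-1 variable (regarded as at level 5) to pin a variable at exactly level 6 and reach type $(6,0,1,0)$. You identify the right tools, but ``disposes of most subcases'' and ``should handle the remaining cases'' are placeholders, not a proof; the accounting that ensures every branch terminates in an archetypal type or in Hensel's Lemma is missing.
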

\begin{proof}
By Lemma \ref{quartic_max7}, assume level 0 contains exactly six variables.  By Lemma \ref{6010}, assume level 2 is empty.  By normalization, level 1 contains at least three variables, and so by Lemma \ref{3021}, assume level 3 contains at most one variable.  Thus assume (\ref{quartic_form}) is of type $(6,4,0,1)$ or $(6,5,0,0)$.  By Lemma \ref{6010} assume the variables in level 1 all have the same $\pi$-coefficient.

Suppose first that (\ref{quartic_form}) is of type $(6,5,0,0)$.  Note that any variable formed from a contraction from level 1 will be free at level 6.  Supposing a pair in level 1 contracts to level 4, by the pigeonhole principle there is a pair among the three remaining variables which contracts to level 3, and a zero follows from Corollary \ref{7001}.  If a pair in level 1 contracts to at least level 5, then the resulting variable is free at level 6, and another variable from level 1 may be treated as being in level 5.  Thus by Lemma \ref{quartic_slide}, a variable can be formed at level 6, and a zero follows from Lemma \ref{6010}.  Thus assume every pair of variables in level 1 can only contract to level 3, and so they all belong to the same $\pi,\pi^2$-class.  By the pigeonhole principle and Lemma \ref{con4}, four of these variables contract to a variable in at least level 5.  Because this variable is free at level 6, by Lemma \ref{quartic_slide}, a variable can be formed at level 6 as before, and a zero follows from Lemma \ref{6010}.

Suppose now that (\ref{quartic_form}) is of type $(6,4,0,1)$.  By Corollary \ref{5011}, assume the variables in level 0 have the same $\pi$-coefficient.  If a pair from level 0 contracts to level 3, a zero follows from Lemma \ref{3021}.  If a pair from level 0 contracts to level 4 or higher, by Lemma \ref{quartic_slide} a variable can be formed in level 5, and a zero follows from Corollary \ref{5011}.  Thus assume all variables in level 0 are in the same $\pi,\pi^2$-class.  By the pigeonhole principle and Lemma \ref{con4}, four variables contract to a variable in level at least 4.  By Lemma \ref{quartic_slide}, a variable can be formed in level 5, and a zero follows from Corollary \ref{5011}.
\end{proof}

\begin{lemma}
\label{quartic_max5}
Suppose that $s \ge 11$ and after normalization (\ref{quartic_form}) has at least five variables in level 0.  Then (\ref{quartic_form}) has a nontrivial zero.  
\end{lemma}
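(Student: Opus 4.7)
The plan is to mirror the structure of Lemmas \ref{quartic_max7} and \ref{quartic_max6}: reduce as far as possible using previously-established archetypal lemmas on every cyclic shift, and then handle the small number of residual distributions by direct contractions. First I would apply Lemma \ref{quartic_max6} to reduce to exactly five variables in level 0. Since cyclic permutation of levels does not alter the existence of a nontrivial zero, assume further that no level contains six or more variables, for otherwise the shift that places such a level at position 0 is handled by Lemma \ref{quartic_max6} or \ref{quartic_max7}. Dropping redundant variables, take $s = 11$, so $s_1 + s_2 + s_3 = 6$ with each $s_i \le 5$, $s_1 \ge 1$, and $s_1 + s_2 \ge 4$ by normalization.

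The bulk of the work is a finite case check. I would enumerate the triples $(s_1, s_2, s_3)$ satisfying these constraints and, for each, test every cyclic shift of $(5, s_1, s_2, s_3)$ against the three relevant archetypal conditions: Lemma \ref{3021} (requires $s_0 \ge 3,\ s_2 \ge 2,\ s_3 \ge 1$), Corollary \ref{5011} (requires $s_0 \ge 5,\ s_2 \ge 1,\ s_3 \ge 1$), and Corollary \ref{5030} (requires $s_0 \ge 5,\ s_2 \ge 3$). I expect this sweep to resolve every admissible distribution except two: $(5, 5, 1, 0)$ and $(5, 4, 2, 0)$. For instance, $(5, 4, 0, 2)$ rotates to $(4, 0, 2, 5)$ and falls to Lemma \ref{3021}; $(5, 5, 0, 1)$ rotates to $(5, 0, 1, 5)$ and falls to Corollary \ref{5011}; $(5, 2, 4, 0)$ falls to Corollary \ref{5030}.

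Each of the two residual cases should fall to a single pigeonhole contraction. For $(5, 5, 1, 0)$, Lemma \ref{quartic_pigeonhole} produces a pair in level 1 that contracts exactly two levels up to level 3, yielding a form of type $(5, 3, 1, 1)$; Corollary \ref{5011} then finishes. For $(5, 4, 2, 0)$, Lemma \ref{quartic_pigeonhole} produces a pair in level 0 contracting exactly two levels up to level 2, yielding a form of type $(3, 4, 3, 0)$; multiplying by $\pi^2$ rewrites this as type $(3, 0, 3, 4)$, so Lemma \ref{3021} finishes.

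The main obstacle is the routine but tedious bookkeeping of the case check in step two, namely confirming that the three archetypal conditions above, checked against all four cyclic shifts of each admissible distribution, really do eliminate every case except the two named. The contractions themselves in the final two cases are immediate, and neither Lemma \ref{con4} nor Lemma \ref{quartic_slide} is needed here.
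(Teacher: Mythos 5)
Your reduction step contains a fatal error: you assume that ``no level contains six or more variables, for otherwise the shift that places such a level at position 0 is handled by Lemma \ref{quartic_max6} or \ref{quartic_max7}.'' But those lemmas require the six-plus variables to sit in level 0 \emph{after normalization}, i.e., with the inequalities of Lemma \ref{normalization} in force ($s_0 \ge 3$, $s_0+s_1 \ge 6$, $s_0+s_1+s_2 \ge 9$ for $s=11$); the proof of Lemma \ref{quartic_max6} explicitly invokes these bounds. The distribution $(5,6,0,0)$ is itself normalized, and its shift $(6,0,0,5)$ violates $s_0+s_1+s_2 \ge 9$, so Lemma \ref{quartic_max6} does not apply to it. Nor does any rotation of $(5,6,0,0)$ meet the hypotheses of Lemma \ref{3021}, Corollary \ref{5011}, Corollary \ref{7001}, Lemma \ref{6010}, or Corollary \ref{5030}. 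So your case sweep silently discards the one genuinely hard case.

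Indeed, the paper's proof spends the bulk of its length on type $(5,6,0,0)$: it first shows (using Corollary \ref{7001}, Lemma \ref{6010}, Lemma \ref{con4}, and Lemma \ref{quartic_slide}) that the five level-0 variables can be consumed to produce either a variable $y$ at level 4 that is free at level 5, or a variable $y'$ at level 5; it then analyzes the six level-1 variables, again via Lemma \ref{con4}, and in the terminal subcases contracts the resulting level-4 or level-5 variable with $y$ or $y'$ to reach level 7, finishing with Hensel's Lemma. Your closing remark that ``neither Lemma \ref{con4} nor Lemma \ref{quartic_slide} is needed here'' is exactly backwards --- they are indispensable for $(5,6,0,0)$. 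The rest of your case analysis (the enumeration of the remaining distributions and the handling of $(5,5,1,0)$ and $(5,4,2,0)$) agrees with the paper and is correct, but the proof is incomplete without an argument for $(5,6,0,0)$.
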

\begin{proof}
By Lemma \ref{quartic_max6}, assume level 0 contains exactly five variables.  By Corollary \ref{5030}, assume level 2 contains at most two variables, and so by normalization level 1 contains at least two variables.

If level 2 contains one or two variables, then by Corollary \ref{5011}, assume level 3 is empty, and so level 1 contains five or four variables, respectively.  If level 1 contains five variables, by the pigeonhole principle there is a pair that can be contracted to level 3, and a zero follows from Corollary \ref{5011}.  If, on the other hand, level 2 contains two variables, contract a pair from level 0 to level 2, and a zero follows from Lemma \ref{3021}.

Thus, assume level 2 is empty.  By normalization, level 1 contains at least four variables.  By Lemma \ref{3021}, assume level 3 contains at most one variable, and by normalization level 1 contains at least five variables.  If it contains exactly five, then level 3 contains one, and a zero follows from Corollary \ref{5011}.

Thus, assume (\ref{quartic_form}) is of type $(5,6,0,0)$.  We first describe two possibilities for forming variables using those in level 0.  By Corollary \ref{7001}, assume all variables in level 0 have the same $\pi$-coefficient.  By Lemma \ref{6010}, assume no pair from level 0 contracts to level 3.  If a pair from level 0 contracts at least to level 4, then by Lemma \ref{quartic_slide} it can be used to form a variable in level 5, and a zero follows from Corollary \ref{7001}.  Thus assume all variables in level 0 are in the same $\pi,\pi^2$-class and so by the pigeonhole principle there are two pairs in the same $\pi^2,\pi^3$-class.  Therefore by Lemma \ref{con4} four of these variables can be contracted to a variable in at least level 4.  If they contract to at least level 5, they can be contracted to exactly level 5 and a zero follows from Corollary \ref{7001}.  Thus, assume a variable $y$ which is free at level 5 could be formed at level 4 by consuming four variables in level 0.  Further by Lemma \ref{quartic_slide} the variable $y$ and the remaining variable from level 0 could be consumed to form a variable $y'$ at level 5.

Now, returning to the original form of type $(5,6,0,0)$, we consider contractions which can be performed with variables in level 1.  A pair can be contracted from level 1 to level 3, and so by Corollary \ref{5011}, assume all variables in level 1 have the same $\pi$-coefficient.  If a pair from level 1 contracts to at least level 6, it can be contracted exactly to level 6 (and regarded as a variable in level 2), another pair can be contracted to level 3, and a zero follows from Corollary \ref{5011}.  If a pair from level 1 contracts to level 5, then the resulting variable is free at level 6, and thus can be contracted with $y'$ to a variable at level at least 7, and a zero follows from Hensel's Lemma.  If a pair from level 1 contracts to level 4, then the resulting variable is free at level 6, may be contracted with $y$ which is free at level 5, resulting in a variable at level at least 7, and a zero follows from Hensel's Lemma.  Thus assume all pairs in level 1 contract to level 3, and so are all in the same $\pi,\pi^2$-class.  Therefore, by the pigeonhole principle and Lemma \ref{con4} four variables can be contracted to a variable at level at least 5. If they contract to level at least 6, they can be contracted exactly to level 6, the remaining pair in level 1 can be contracted to level 3, and a zero follows from Corollary \ref{5011}.  Thus, the four variables contract to a variable in level 5 which can then be contracted with $y'$, resulting in a variable at level at least 7, and a zero follows from Hensel's Lemma.
\end{proof}

\begin{lemma}
\label{quartic_max4}
Suppose that $s \ge 11$ and after normalization (\ref{quartic_form}) has at least four variables in level 0.  Then (\ref{quartic_form}) has a nontrivial zero.  
\end{lemma}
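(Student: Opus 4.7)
By Lemma~\ref{quartic_max5}, we may assume after normalization that $s_0 = 4$ exactly, and take $s = 11$ without loss of generality. The normalization inequalities of Lemma~\ref{normalization} then force $s_1 \ge 2$ and $s_1 + s_2 \ge 5$, leaving fifteen candidate types $(4, s_1, s_2, s_3)$ with $s_1 + s_2 + s_3 = 7$.

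Most of these types are dispatched by invoking an archetypal lemma from Section~3 after a cyclic permutation of levels. Ten of the fifteen types (namely $(4,2,3,2)$, $(4,2,4,1)$, $(4,2,5,0)$, $(4,3,2,2)$, $(4,3,3,1)$, $(4,3,4,0)$, $(4,4,1,2)$, $(4,4,2,1)$, $(4,4,3,0)$, $(4,5,0,2)$) contain $(3, 0, 2, 1)$ cyclically and so reduce to Lemma~\ref{3021}; the types $(4, 5, 1, 1)$ and $(4, 6, 0, 1)$ contain $(5, 0, 1, 1)$ cyclically (Corollary~\ref{5011}); and $(4, 7, 0, 0)$ contains $(7, 0, 0, 1)$ cyclically (Corollary~\ref{7001}). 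For $(4, 5, 2, 0)$, Lemma~\ref{quartic_pigeonhole} produces a pair among the five level-$1$ variables contracting exactly to level~$3$, giving a form of type $(4, 3, 2, 1)$ that contains $(3, 0, 2, 1)$.

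The main obstacle is the type $(4, 6, 1, 0)$, which contains no archetypal configuration cyclically. Here the plan branches on the $\pi$-coefficient distribution in level~$0$. If two level-$0$ variables have differing $\pi$-coefficient, contracting them exactly one level up produces type $(2, 7, 1, 0)$, which contains $(7, 0, 0, 1)$ cyclically (Corollary~\ref{7001}). Otherwise, all four level-$0$ variables share a $\pi$-coefficient; pigeonhole on $\pi^2$ then yields a same-$(\pi, \pi^2)$ pair contracting exactly to level~$2$, producing type $(2, 6, 2, 0)$ in which one of the level-$2$ variables is free at absolute level~$5$ by Lemma~\ref{contractions}(4). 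Contracting the two level-$2$ variables and analyzing the absolute level of the result then gives $(2, 6, 0, 1)$ (containing $(5, 0, 1, 1)$ cyclically) or $(2, 7, 0, 0)$ (containing $(7, 0, 0, 1)$ cyclically), or invokes Hensel's Lemma directly once the chain from an initial level-$0$ variable reaches absolute level~$\ge 7$.

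The truly stubborn sub-cases are those in which the level-$2$ contraction lands exactly at level~$4$ or level~$6$, producing no immediate archetype. These are resolved by a secondary analysis of the $(\pi, \pi^2, \pi^3)$-coefficient classes among the six level-$1$ variables: Lemma~\ref{con4} contracts four of them to a variable at level~$\ge 5$ (free at level~$6$), which, combined via Lemma~\ref{quartic_slide} with the freeness at level~$5$ inherited from the earlier contractions, pushes a variable to absolute level~$\ge 7$ and triggers Hensel's Lemma (Lemma~\ref{quartic_hensels_lemma}) to finish the proof. The technical difficulty throughout this last case lies in coordinating the freeness data produced by successive contractions with the surviving variables at intermediate levels so that Lemma~\ref{quartic_slide} genuinely applies.
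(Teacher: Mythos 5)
Your reduction to the two residual types $(4,5,2,0)$ and $(4,6,1,0)$ matches the paper (the paper reaches the same pair of types by successive lemma applications rather than by enumeration), and your opening moves on $(4,6,1,0)$ --- all level-$0$ variables sharing a $\pi$-coefficient, a same-$(\pi,\pi^2)$ pair contracted to level $2$, then the two level-$2$ variables contracted --- are correct. One small point: the ``lands exactly at level $6$'' sub-case you flag as stubborn never arises, because whenever that contraction reaches level $\ge 5$ the freeness at level $5$ lets you place it at exactly level $5$, giving $(2,7,0,0)$ and hence Corollary~\ref{7001}. The only genuinely stubborn landing is level $4$.

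For that sub-case your argument has a real gap. You assert that Lemma~\ref{con4} contracts four of the six level-$1$ variables to a variable at level $\ge 5$. But Lemma~\ref{con4} requires four variables with a common $\pi$-coefficient that split into two pairs, each pair lying in a single $(\pi^2,\pi^3)$-class, and six arbitrary level-$1$ variables need not provide this: the $\pi$-coefficients may split $3{+}3$, and even if all six share a $\pi$-coefficient their $(\pi^2,\pi^3)$-classes may split $3{+}1{+}1{+}1$, yielding only one monochromatic pair. In the paper's other proofs, Lemma~\ref{con4} is invoked only after every pair contracting to level $2$ or to level $\ge 4$ has been disposed of first; here those escape routes lead to configurations such as $(4,4,2,0)$ that contain no archetype, so the missing case analysis is not routine. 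The paper avoids all of this: in the stubborn sub-case the new variable at level $4$ is free at level $5$, and the two \emph{unused} level-$0$ variables sit at level $\equiv 4 \pmod 4$, so a single further contraction produces a variable at exactly level $5\equiv 1$; this yields seven variables at level $1$ and hence, after a cyclic shift, type $(7,0,0,1)$, and Corollary~\ref{7001} finishes. You should replace the Lemma~\ref{con4} step for this sub-case with that observation, or else supply the substantial case analysis your route requires.
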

\begin{proof}
By Lemma \ref{quartic_max5}, assume level 0 has exactly four variables.  By normalization, level 1 has at least two variables.  By Lemma \ref{3021}, assume level 2 has at most two variables, and so by normalization level 1 has at least three variables.  By Lemma \ref{3021}, assume level 3 has at most one variable, and further by the same lemma that either level 2 has at most one variable or level 3 is empty.  In either case, levels 2 and 3 together have at most two variables, and so level 1 has at least five variables.  By Corollary \ref{5011}, assume level 3 is empty, and by Corollary \ref{7001} that level 1 has at most six variables.  Thus assume (\ref{quartic_form}) is of type $(4,6,1,0)$ or $(4,5,2,0)$.  If it is of type $(4,5,2,0)$, then a pair from level 1 can be contracted to level 3, and a zero follows from Lemma \ref{3021}.

Thus, assume (\ref{quartic_form}) is of type $(4,6,1,0)$.  By Corollary \ref{7001}, assume all variables in level 0 have the same $\pi$-coefficient.  If a pair from level 0 contracts to level 3, a zero follows from Lemma \ref{6010}, and if a pair contracts to level at least 4, then a zero follows from Lemma \ref{quartic_slide} (with $k=4$) and Hensel's Lemma.  Thus assume that two pairs in level 0 contract to level 2.  If any two of the three resulting variables in level 2 have different $\pi$-coefficients, then a zero follows from Lemma \ref{6010}.  Thus, contract just one pair from level 0 to level 2, and the resulting pair contracts at least to level 4.  We may assume that it contracts to level 4 or 5 (because it is free at level 5).  By Lemma \ref{quartic_slide}, if the variable was formed at level 4, it can be contracted to create a variable in level 5, which by a change of variables can be moved to level 1.  A zero follows from Corollary \ref{7001}.
\end{proof}

\begin{lemma}
\label{quartic_max3}
Suppose that $s \ge 11$ and after normalization (\ref{quartic_form}) has at least three variables in level 0.  Then (\ref{quartic_form}) has a nontrivial zero.  
\end{lemma}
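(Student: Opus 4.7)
The plan is to mirror the structure of Lemmas \ref{quartic_max4} through \ref{quartic_max7}: first invoke Lemma \ref{quartic_max4} to reduce to exactly three variables at level 0, then exhaustively apply the established archetypal lemmas under cyclic shifts to whittle the possible distributions down, and handle whatever genuine residual cases remain by a direct contraction.

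After reducing to $s_0 = 3$ via Lemma \ref{quartic_max4} (so $s_1+s_2+s_3 = 8$), normalization forces $s_1 \ge 3$ and $s_1 + s_2 \ge 6$. The key step is to determine exactly when Lemma \ref{3021} fails to apply through some cyclic shift of the form. Comparing each of the four cyclic shifts of $(3,s_1,s_2,s_3)$ componentwise against $(3,0,2,1)$ and using $s_1 \ge 3$, a short case analysis shows that Lemma \ref{3021} applies unless $s_2 \le 2$, $s_3 \le 1$, and ($s_2 \le 1$ or $s_3 = 0$). Combined with the normalization inequalities, the surviving distributions are exactly $(3,8,0,0)$, $(3,7,0,1)$, $(3,7,1,0)$, $(3,6,1,1)$, and $(3,6,2,0)$.

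The first four of these succumb to cyclic shifts plus a single archetype: shifting level 1 to level 0 turns them into $(8,0,0,3)$, $(7,0,1,3)$, $(7,1,0,3)$, and $(6,1,1,3)$, which are of types $(8,0,0,0)$, $(7,0,0,1)$, $(7,0,0,1)$, and $(6,0,1,0)$ respectively, so Corollary \ref{8000}, Corollary \ref{7001} (twice), and Lemma \ref{6010} dispose of them.

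The only genuinely hard case is $(3,6,2,0)$: direct checking shows that no cyclic shift of this distribution is componentwise above any of the archetypes we have established, so I expect this to be the main obstacle. To resolve it, I would exploit the six variables at level 1: by Lemma \ref{quartic_pigeonhole}, a pair among them can be contracted \emph{exactly} two levels up, producing a variable at level 3. This replaces the form by one of distribution $(3,4,2,1)$, which is of type $(3,0,2,1)$, and Lemma \ref{3021} completes the proof.
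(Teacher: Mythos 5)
Your proposal is correct and follows essentially the same route as the paper: reduce to exactly three variables at level 0, eliminate all residual distributions except $(3,6,2,0)$ using the archetypal lemmas under cyclic shifts, and finish that case by contracting a pair from level 1 exactly two levels to level 3 so that Lemma \ref{3021} applies. The only difference is presentational (you enumerate the five surviving distributions explicitly, while the paper narrows them down sequentially), and your case analysis and choice of archetypes are all verified correct.
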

\begin{proof}
By Lemma \ref{quartic_max4}, assume level 0 contains exactly three variables.  Suppose level 3 contains one variable.  Then by Lemma \ref{3021}, assume level 2 contains at most one variable, and so level 1 contains at least six variables, and a zero follows from Lemma \ref{6010}.

Thus assume level 3 is empty.  By Corollary \ref{5030}, assume level 2 has at most four variables, and so level 1 has at least four variables.  By Lemma \ref{3021}, assume level 2 has at most two variables, and so level 1 has at least six variables.  By Corollary \ref{7001}, assume level 1 has exactly six variables, and thus level 2 has exactly two.  Contract a pair from level 1 to level 3.  A zero follows from Lemma \ref{3021}.
\end{proof}

Because $s \ge 11$, level 0 will contain at least three variables after normalization, and so by Lemma \ref{quartic_max3} we have that $\Gamma^*(4,K) \le 11$.  In order to establish equality we will demonstrate an anisotropic form in ten variables.

\begin{theorem}
Let $G = (x_0^4 + x_1^4 + x_2^4 + x_3^4)$ and $H = (x_4^4 + x_5^4 + x_6^4 + x_7^4 + x_8^4 + x_9^4)$. 

\smallskip

For $K \in \{\mathbb{Q}_2(\sqrt{2}), \mathbb{Q}_2(\sqrt{10})\}$, $F = G + \pi H$ has no nontrivial zero.

\smallskip

For $K \in \{\mathbb{Q}_2(\sqrt{-2}), \mathbb{Q}_2(\sqrt{-10})\}$, $F = G + (\pi + \pi^2)H$ has no nontrivial zero.   
\end{theorem}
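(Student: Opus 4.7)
The plan is to show that neither form admits a primitive zero modulo $\pi^8$, which (after scaling) rules out nontrivial zeros in $\mathcal{O}$. Since $x^4 \equiv 0 \pmod{\pi^8}$ whenever $\nu_\pi(x) \ge 2$, only the counts $j_0, j_1$ of level-$0$ and level-$1$ variables among $x_0, \ldots, x_3$, and the analogous counts $k_0, k_1$ among $x_4, \ldots, x_9$, affect $F$ modulo $\pi^8$.

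I would first perform coarse reductions on a hypothetical primitive zero. Using $2 \equiv \pi^2 \pmod{\pi^3}$ in all four fields, $F \equiv 0 \pmod{\pi^3}$ forces $j_0 \in \{0, 4\}$ and $k_0$ even. Modulo $\pi^5$, the cases $(j_0, k_0) \in \{(4,0), (4,4)\}$ are excluded: $j_0 = 4$ forces $j_1 = 0$ and $G \equiv 4 \equiv \pi^4 \pmod{\pi^5}$, while the $H$-side contributes nothing at $\pi^4$. Primitivity excludes $(0,0)$, so only $(j_0, k_0) = (0, 4)$ with $j_1 \in \{0, 2, 4\}$ survives, and a mod-$\pi^6$ analysis then forces $k_1 = 1$ and $k_{\ge 2} = 1$.

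The key step is a refinement of Lemma \ref{powers} modulo $\pi^8$. Expanding $(1+a\pi)^4 = 1 + 4a\pi + 6a^2\pi^2 + 4a^3\pi^3 + a^4\pi^4$ and tracking $a^2, a^4$ modulo $\pi^4$ with the field-specific expansions of $2$, $4$, and $6$ modulo $\pi^8$, I obtain
\[
(1+a\pi)^4 \equiv 1 + a_0\pi^5 + \delta\, a_0\pi^6 + c(a)\pi^7 \pmod{\pi^8},
\]
where $\delta = 0$ for $K \in \{\mathbb{Q}_2(\sqrt{2}), \mathbb{Q}_2(\sqrt{10})\}$ and $\delta = 1$ for $K \in \{\mathbb{Q}_2(\sqrt{-2}), \mathbb{Q}_2(\sqrt{-10})\}$, and $c(a) \in \{0, 1\}$ is an extra bit depending on $a$. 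This matches Lemma \ref{powers} modulo $\pi^7$ and supplies the new $\pi^7$-data.

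Combining these ingredients in the surviving case, using $2\pi^4 \equiv \pi^6 \pmod{\pi^8}$ in all four fields, the sum $H$ works out to $H \equiv p\pi^5 + (1 + \delta p)\pi^6 + c'\pi^7 \pmod{\pi^8}$ for some parity $p$ of the level-$0$ $a_i$'s and a free bit $c'$. Hence $\pi H \equiv p\pi^6 + (1 + \delta p)\pi^7 \pmod{\pi^8}$, with $\pi^7$-coefficient $1$ when $\delta = 0$. For $\delta = 1$, the extra $\pi^2 H \equiv p\pi^7 \pmod{\pi^8}$ combines with $\pi H$ so that $(\pi+\pi^2)H$ has $\pi^7$-coefficient $(1+p)+p = 1$. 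Since $G \equiv j_1\pi^4 \pmod{\pi^8}$ has zero $\pi^7$-coefficient for each $j_1 \in \{0, 2, 4\}$, the $\pi^7$-coefficient of $F$ is forced to be $1$, contradicting $F \equiv 0 \pmod{\pi^8}$. The main obstacle is the mod-$\pi^8$ refinement of Lemma \ref{powers}; carefully handling $(a^2 + a^4)\pi^4$ (whose $\pi^6$-contribution vanishes modulo $2$ via $(a_0 \oplus a_1) + a_1 + a_0 \equiv 0$ when $\delta = 0$) and verifying that the $\pi^7$-coefficient of $F$ is robust against all free bits constitute the main calculational effort.
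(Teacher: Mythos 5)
Your proposal is correct and follows essentially the same route as the paper: both reduce to showing that $F$ has no primitive zero modulo $\pi^8$, isolate the same critical configuration (four unit variables in $H$ together with one at level $1$ and one at level $\ge 2$, all variables of $G$ non-units with an even number at level $1$), and derive the contradiction from the $\pi^7$-coefficient of $F$ being forced to equal $1$. Two minor remarks: your opening claim that only the counts $j_0, j_1, k_0, k_1$ determine $F$ modulo $\pi^8$ is literally false (the $\pi^5$-coefficients of the unit fourth powers matter, as your own parities $p$ and free bits later acknowledge), and the mod-$\pi^8$ refinement of Lemma \ref{powers} that you flag as the main obstacle is actually unnecessary, since the unknown $\pi^7$-coefficients of the unit fourth powers in $H$ get multiplied by $\pi$ (hence land at $\pi^8$) and the critical value of $G$ involves no unit fourth powers at all, so knowledge modulo $\pi^7$ suffices, exactly as in the paper.
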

\begin{proof}
Let $v_\pi$ be the valuation function giving the greatest power of $\pi$ dividing $\alpha \in \mathcal{O}$.  Because the only fourth powers modulo $\pi^5$ are 0 and 1, $G$ has no primitive zero modulo $\pi^5$, and therefore no nontrivial zero in $K$.  Thus, for $F$ to have a nontrivial zero $a = (a_0, a_1, \ldots, a_9)$, we must have that $v_\pi(G(a)) = v_\pi(H(a)) + 1 \ne \infty$ for some substitution of the variables, and so one of $v_\pi(G(a))$ and $v_\pi(H(a))$ is odd and the other is even.  Without loss of generality, we may assume that all variables take values in $\mathcal{O}$.  To determine the valuations of values that can be represented by $G$, we may reduce to the case where at least one of its variables takes a unit value.  Recall from Lemma \ref{powers} that all unit fourth powers (modulo $\pi^7$) are congruent to either $1$ or $1+\pi^5$ if $K \in \{\mathbb{Q}_2(\sqrt{2}),\mathbb{Q}_2(\sqrt{10})\}$, and $1$ or $1 + \pi^5 + \pi^6$ if $K \in \{\mathbb{Q}_2(\sqrt{-2}),\mathbb{Q}_2(\sqrt{-10})\}$.  If an odd number of variables in $G$ take a unit value, then $v_\pi(G(a)) = 0$; if two variables take a unit value, then $v_\pi(G(a)) = 2$; and if four variables take a unit value, then $v_\pi(G(a)) = 4$.  It follows that $G$ represents no values with odd valuation and so $v_\pi(H(a))$ is odd.

Again, recall that $2 \equiv \pi^2 \pmod{\pi^6}$ for $K \in \{\mathbb{Q}_2(\sqrt{2}),\mathbb{Q}_2(\sqrt{10})\}$, and $2 \equiv \pi^2 + \pi^4 \pmod{\pi^6}$ for $K \in \{\mathbb{Q}_2(\sqrt{-2}),\mathbb{Q}_2(\sqrt{-10})\}$, and so in either case $4 \equiv \pi^4 \pmod{\pi^7}$.  We again assume at least one of the variables in $H$ takes a unit value.  By the above reasoning, an even number of variables of $H$ must take unit values.  If two or six variables take unit values, then $v_\pi(H(a)) = 2$.  Thus, assume exactly four variables take unit values.  The values representable by these four variables (modulo $\pi^7$) are $\pi^4$, $\pi^4 + \pi^5$ for $K \in \{\mathbb{Q}_2(\sqrt{2}),\mathbb{Q}_2(\sqrt{10})\}$, and $\pi^4$, $\pi^4 + \pi^5 + \pi^6$ for $K \in \{\mathbb{Q}_2(\sqrt{-2}),\mathbb{Q}_2(\sqrt{-10})\}$.  If neither or both of the remaining two variables are divisible by $\pi^2$, then $v_\pi(H(a)) = 4$.  Thus assume both are divisible by $\pi$ with exactly one divisible by $\pi^2$.  It follows that the only values (modulo $\pi^{7+4j}$) with odd valuation represented by $H$ are $\pi^{4j}(2\pi^4 + \pi^5) \equiv \pi^{4j}(\pi^5 + \pi^6)$ if $K \in \{\mathbb{Q}_2(\sqrt{2}), \mathbb{Q}_2(\sqrt{10})\}$, and $\pi^{4j}(2\pi^4 + \pi^5 + \pi^6) \equiv \pi^{4j}(\pi^5)$ if $K \in \{\mathbb{Q}_2(\sqrt{-2}), \mathbb{Q}_2(\sqrt{-10})\}$, where $\pi^j$ divides all of the values taken by the variables of $H$.

$G$ can only represent a value with valuation greater than 4 if all of its variables take nonunit values.  Without loss of generality, assume one of the variables of $F$ takes a unit value, and so $v_\pi(H(a)) = 5$, i.e., $j=0$.  Further, $G$ can only represent a value with valuation 6 if the values taken by two of its variables have valuation 1, and the values taken by the other two have valuation at least 2.  But then $G(a) \equiv \pi^6$ modulo $\pi^8$, and so we have for $K \in \{ Q_2(\sqrt{2}), Q_2(\sqrt{10}) \}$ that $\pi^6 + \pi (\pi^5 + \pi^6 + b \pi^7) \equiv \pi^7 \bmod \pi^8$, and for $K \in \{ Q_2(\sqrt{-2}), Q_2(\sqrt{-10}) \}$ that $\pi^6 + (\pi + \pi^2) (\pi^5 + b \pi^7) \equiv \pi^7 \bmod \pi^8$.  Thus $F$ has no primitive zero modulo $\pi^8$, and thus no nontrivial zero in $K$.
\end{proof}

This completes the proof.

\bibliographystyle{plain}
\bibliography{biblio}

\end{document}